%-----------------------------------------------------------------------
% Beginning of mcom-l-template.tex
%-----------------------------------------------------------------------
%
%     This is a topmatter template file for MCOM for use with AMS-LaTeX.
%
%     Templates for various common text, math and figure elements are
%     given following the \end{document} line.
%
%%%%%%%%%%%%%%%%%%%%%%%%%%%%%%%%%%%%%%%%%%%%%%%%%%%%%%%%%%%%%%%%%%%%%%%%

%     Remove any commented or uncommented macros you do not use.

\documentclass[final,leqno]{amsart}

\usepackage{graphicx}
\usepackage{amssymb}
\usepackage{amsmath}

\usepackage{hyperref}
\usepackage{mathrsfs}
\usepackage{float}
\usepackage{mathtools}

\usepackage[usenames]{xcolor}

%\usepackage{refcheck} 

%comentarios de los autores

\newtheorem{teo}{Theorem}[section]

\theoremstyle{definition}

\newtheorem{prop}[teo]{Proposition}

\newtheorem{assunp}[teo]{Assumption}
\newtheorem{hypo}{Hypothesis}
\newtheorem{problem}{\bfseries Problem\rmfamily}
\theoremstyle{remark}

\numberwithin{equation}{section}

%Letras y simbolos
%%%%%%%% algunas letras
\newcommand{\mV}{\mathcal{V}}
\newcommand{\mQ}{\mathcal{Q}}
\newcommand{\mK}{\mathcal{K}}
\newcommand{\mH}{\mathcal{H}}

\newcommand{\mF}{\mathcal{F}}
\newcommand{\mG}{\mathcal{G}}
\newcommand{\eu}{\texttt{e}_u}
\newcommand{\ep}{\texttt{e}_p}
\newcommand{\ew}{\texttt{e}_w}
\newcommand{\emm}{\texttt{e}_m}

%Simbolos
\DeclareMathOperator{\Ima}{Im}
\newcommand\mA{\mathbb{A}}

\newcommand\mB{\mathbb{B}}

\newcommand\mL{\mathbb{L}}
\newcommand{\dive}{\operatorname{div}}

\usepackage{array}   % for \newcolumntype macro
\newcolumntype{C}{>{$}c<{$}} % math-mode version of "l" column type
\usepackage{threeparttable}
\newcolumntype{A}{>{\centering}p{0.1\textwidth}}
\newcolumntype{B}{>{\centering}p{1cm}}
\begin{document}

\title[Abstract mixed viscoelastic formulation]{Analysis of an abstract mixed formulation for viscoelastic problems}
%    Only \author and \address are required; other information is
%    optional.  Remove any unused author tags.

%    author one information
% \author[short version for running head]{name for top of paper}
\author{Erwin Hern\'andez}
\address{Departamento de Matem\'atica, Universidad T\'ecnica Federico Santa Mar\'ia,Casilla 110-V Valparaiso, Chile}
\email{erwin.hernandez@usm.cl}
\thanks{The first author has been partially supported by ANID-Chile through FONDECYT No. 1181098, Chile}

%    author two information
\author{Felipe Lepe}
\address{GIMNAP-Departamento de Matem\'atica, Universidad del B\'io-B\'io, Casilla 5-C, Concepci\'on, Chile}
\email{flepe@ubiobio.cl}
\thanks{The second author has been partially supported by ANID-Chile through FONDECYT project No. 11200529, Chile}

%    author three information
\author{Jesus Vellojin}
\address{Departamento de Matem\'atica, Universidad T\'ecnica Federico Santa Mar\'ia,Casilla 110-V Valparaiso, Chile}
\email{jesus.vellojinm@usm.cl}
\thanks{The third author has been partially supported by ANID-Chile through FONDECYT No. 1181098, Chile}
%    \subjclass is required.
\subjclass[2010]{Primary 65M12 65M60 35Q74 45D05 Secondary 65R20 65N12 74D05}

\date{}

\dedicatory{}

%    Abstract is required.
\keywords{Viscoelastic structures, locking-free, Volterra integral, mixed formulations}

\maketitle
\begin{abstract}
	This study provides an abstract framework to analyze mixed formulations in viscoelasticity, in the classic saddle point form. Standard hypothesis for mixed methods are adapted to the Volterra type equations in order to obtain stability of the proposed problem. Error estimates are derived for suitable finite element spaces. We apply the developed theory to a bending moment formulation for a linear viscoelastic Timoshenko beam and for the Laplace operator with memory terms. For both problems we report numerical results to asses the performance of the methods.
	
\end{abstract}

\section{Introduction}
\label{sec:intro}
\subsection{Scope}
The classic  mixed saddle point problems of the form: find $(\boldsymbol{v},p)\in\mathcal{V}\times\mathcal{Q} $ such that
\begin{align}
	a(\boldsymbol{v},\boldsymbol{\tau})+b(\boldsymbol{\tau},p)&=\langle f,\boldsymbol{\tau}\rangle_{\mV}\quad\forall\boldsymbol{\tau}\in\mathcal{V},\label{1}\\
	b(p,q)&=\langle g,q\rangle_\mQ\quad\forall q\in \mathcal{Q},\label{2}
\end{align}
where $\mathcal{V},\mathcal{Q}$ are Hilbert spaces, $\langle\cdot,\cdot\rangle_\mV$ is the duality pairing between $\mV$ and its dual $\mV'$, and $\langle\cdot,\cdot\rangle_\mQ$ is the corresponding duality pairing between $\mQ$ and $\mQ'$, has been fully studied in the past years, and the theory to analyze existence and uniqueness can be found in the classic book \cite{boffi2013mixed}.

In this paper, we consider the extension of \eqref{1}--\eqref{2} and incorporate the corresponding history terms, inherent for problems with memory, and develop an ad-hoc abstract theory to establish the existence and uniqueness of a system of Volterra equations arising from the Boltzmann principle. The goal problem of our paper is the following:

\begin{problem}\label{prob4}
	Find a pair $(u,p)\in L^1(\mathcal{J};\mV\times \mQ)$ such that
	\begin{equation*}
		\label{weak-formulation-1}
		\left\{\begin{aligned}
			a(u,v) + b(v,p) &= \langle f,v\rangle_{\mV} + \int_{0}^{t}\bigg[k_1(t,s) a(u(s),v) + k_2(t,s)b(v,p(s))\bigg]\,ds,\\
			b(u,q)&=\langle g,q\rangle_{\mQ} + \int_{0}^tk_3(t,s)b(u(s),q)ds,
		\end{aligned}\right.
	\end{equation*}
	for all $(v,q)\in\mV\times\mQ$,
\end{problem}
Here, $k_i$ with  $i=1,2,3$,  are bounded and continuous Volterra kernels. This type of system, with different kind of the Volterra kernels, are present in non-local flows in porous media, viscoelasticity, contact problems, among others (see, for instance, \cite{ewing2002sharp,sinha2009mixed,karaa2015optimal,matei2018mixed}).
We provide an analysis for the continuous formulation, together with its finite element discretization, and error analysis in suitable norms. Note that the proposed problem above gives flexibility on how to study several models arising from different phenomenas involving materials with memory. While mixed formulations deals with the spatial behavior, the theory of integro-differential equations deals with the implications in time, providing regularity results that proves to be useful when using numerical schemes.  

The analysis for our paper consists in the rigorous derivation of several estimates for mixed viscoelastic formulation like Problem \ref{prob4}, where the constants of each of these estimates are completely described in order to observe the dependency of other constants that may deteriorate the error estimates of some particular numerical method. An important example in this context are the slender thin structures, like 
beams, plates, and shells, since their response when external forces are applied depend on the thickness parameter that can produce numerical locking when standard numerical methods are used; this effect can be avoided when mixed formulations are considered. 

\subsection{Related work}
It is well known that viscoelastic properties for different materials are common in different applications, like industry, the design of different devices that depend on structures with viscoelastic responses, biomechanics, fluid, and solid mechanics etc. Due this applications, and others that may appear, it is important to analyze viscoelastic models and numerical methods to approximate the corresponding solutions. From the physical point of view, if we focus on thin structures like plates or beams, the presence of viscoelastic properties  leads to the damping effect that causes dissipation of energy during deformation of the structures.  We refer to Christensen \cite{christensen2012theory} or Flugge \cite{flugge1975viscoelasticity} for further details about the physical phenomenons involved. Similar effects can be analyzed with viscoelastic fluids, as plasma, magma or some types of non-Newtonian fluids, although their mathematical analysis is complex \cite{hanks2003fluid}.

%Also, in \cite{HLV1} an abstract analysis for mixed formulations for any viscoelastic problem has been introduced, where the aim of this work is to establish a suitable framework where the basic mixed hypothesis for mixed problems of  \cite{boffi2013mixed}  leads to  an extension for mixed problems with history terms, where the bounds of stability and error estimates are rigorously obtained in order to ensure the locking-free features for numerical methods. We remark that this work is focused in mixed formulations with perturbation terms.
The importance of the study of mixed formulations for viscoelastic problems lies in the fact that the incorporation of additional unknowns are important in some applications due to their physical meaning. For example, in \cite{sinha2009mixed,karaa2015optimal}, for the Laplace problem with memory, the additional unknown that is incorporated represents the velocity of some fluid in porous media, instead of the  classic stress in the steady problems. Research in the field of contact problems in viscoelasticity also make use of mixed formulation in order to introduce Lagrange multipliers to guarantee existence and uniqueness of a solution, which later helps to provide robust numerical schemes (see for example \cite{matei2018mixed} for a case of contact in viscoelasticity using a Kelvin-Voigt material).

On the other hand, it is well known that mixed formulations
avoid the locking phenomenon that arises due some parameter (or
parameters) present in the models \cite{boffi2013mixed}. This is the case of slender structures. For instance, there is the work of \cite{rognes2010}, where a weak symmetry formulation is considered that leads to a mixed formulation in linear viscoelasticity for solids, that is capable of avoid locking. The authors use constitutive equations in differential operator form, where the models are associated with Maxwell and Kelvin-Voigt materials. Recently, a new approach to deal with the numerical locking-free approximation of thin viscoelastic shells has been introduced in  \cite{hernandez2019}, where the authors had obtained a new viscoelastic shell formulation, based in the mixed formulation of pure elastic shell in \cite{chapelle2010finite}. There is also the corresponding principle, that allows to translate the viscoelastic problem into an equivalent elastic one, for which locking-free numerical methods can be applied. Although this approach is useful to compute exact solutions, it has several drawbacks since it depends on the time invariance of the boundary conditions and the nature of the relaxation modulus \cite{mukherjee2003elastic}.

\subsection{Outline}
The paper is organized as follows: in Section \ref{sec:abstract_analysis} we introduce the framework in which we will analyze Problem \ref{prob4}. Results for the existence and uniqueness of the viscoelastic mixed problem are established. Also, we derive a series of estimates in order to provide several stability results that allows to obtain semi-discrete error estimates. Due to the abstract nature of our model, the resulting constants are given explicitly such that they can be adjusted according to the applications. In Section \ref{sec:applications_timoshenko_beam}, we present two examples in linear viscoelasticity. First, we propose a brief analysis on the Laplace equation with memory, widely used to model flows in porous media and non-fickian flows with memory. We show that a mixed formulation obtained from this model fits the requirements developed in Section \ref{sec:abstract_analysis}, thus stability and error estimates are obtained. On the other hand, we analyze a linear viscoelastic Timoshenko beam model, which is rewritten as a mixed system following the approach of \cite{lepe2014locking}. Using the coercivity and inf-sup conditions provided from the elastic case at the continuous and semi-discrete level, we obtain the respective approximation results and convergence rates, resulting in a locking-free finite element formulation. Both applications are supported with numerical experiments to asses the performance of the mixed methods.

\section{Abstract mixed formulation}
\label{sec:abstract_analysis}
\subsection{Preliminar assumptions}

Let $\mV, \mQ$ two  real Hilbert spaces endowed with norms $\Vert \cdot\Vert_{\mV}$ and $\Vert \cdot\Vert_{\mQ}$, respectively.  We denote by $\mathcal{L}(\mV;\mQ)$ the space of linear and bounded operators defined between $\mV$ and $\mQ$. Let $\mV'$ and $\mQ'$ be the corresponding dual spaces of $\mV$ and $\mQ$, respectively, endowed with norms $\Vert\cdot\Vert_{\mV'}$ and $\Vert \cdot\Vert_{\mQ'}$. Let $\mathcal{J}:=[0,T],\, T\in [0,\infty[$. From the boundedness of $k_i,\, i=1,2,3,$ we assume that there exists $C_{k_i}\geq 0$ such that 
\begin{equation*}
	\vert k_i(t,s)\vert\leq C_{k_i},
\end{equation*}
Also, we assume that each $k_i$ belongs almost everywhere to the triangle (see for instance \cite{gutierrez2014engineering}) 
\begin{equation*}
	\mathcal{T}:= \bigg\{ \tau \in \mathcal{J} \;\;\vert\;\; 0\leq \tau\leq t,\quad t \in \mathcal{J} \bigg\}.
\end{equation*}

For the sake of simplicity, for every Banach space $\mathcal{B}$ and every time interval $[0,t]$, we will denote by $L_{[0,t]}^\ell(\mathcal{B})$ the Bochner space $L^{\ell}(\mathcal{J};\mathcal{B})$, endowed with norm
$$
\Vert \mathfrak{w}\Vert_{L_{[0,t]}^\ell(\mathcal{B})}:=\left(\int_{0}^t\Vert \mathfrak{w}\Vert_{\mathcal{B}}^p\right)^{1/\ell},
$$
for $1\leq \ell <\infty,$ with the usual modification for $\ell=\infty$. If $\mathcal{S}:=[0,t]$ for any $t\in\mathbb{R}^+$, we will simply write $L_{\mathcal{S}}^\ell(\mathcal{B})$.

Let us introduce the following general assumptions  (see \cite{boffi2013mixed} for details).
\begin{assunp}
	\label{assumption-1}
	Let $\mV$ and $\mQ$ be two Hilbert spaces.  Let $a(\cdot,\cdot):\mV\times \mV\rightarrow \mathbb{R}$ and $b(\cdot,\cdot):\mV\times \mQ\rightarrow\mathbb{R}$ be two given bilinear forms, and denote by $\mA:\mV\rightarrow\mV$, $\mB:\mV\rightarrow\mQ$ their corresponding induced linear operators. Assume that the following properties are satisfied:
	\begin{itemize}
		
		\item [i.] The bilinear form $a(\cdot,\cdot)$ is symmetric, positive semi-definite and continuous on $\mV$, respectively, i.e., 
		\begin{equation*}
			\begin{aligned}
				&\vert a(v,w)\vert \leq \Vert \mA \Vert_{\mathcal{L}(\mV;\mV')} \Vert v\Vert_\mV\Vert w\Vert_\mV \equiv\Vert a \Vert_{\mathcal{L(V\times\mV;\mathbb{R})}}\Vert v\Vert_\mV\Vert w\Vert_\mV, \\
			\end{aligned}
		\end{equation*}
		for all $w,v\in\mV$. We will also require that $a(\cdot,\cdot)$ be strongly coercive in $\mathcal{K}$, respectively, i.e., there exist $\alpha_0>0$ such that
		\begin{equation*}
			%\label{teo2-eqp}
			\begin{aligned}
				&a(v_0,v_0)\geq \alpha_0\Vert v_0\Vert_\mV^2 \hspace{0.4cm} \forall v_0\in \mK.
			\end{aligned}
		\end{equation*}
		The norms of the induced operator and bilinear form will be denoted simply by $\Vert a \Vert$. For this,  we have that
		\begin{equation*}
			\begin{aligned}
				&\langle \mA w,v\rangle_{\mV}=\langle w,\mA v\rangle_{\mV}=a(w,v)\hspace{0.4cm}\forall w,v\in\mV.
			\end{aligned}
		\end{equation*}
		\item [ii.]  Similarly, the bilinear form $b(\cdot, \cdot)$ is continuous, i.e.,  
		\begin{equation*}
			\vert b(v,q)\vert\leq \Vert \mB\Vert_{\mathcal{L}(\mV;\mQ')} \Vert v\Vert_\mV\Vert q\Vert_\mQ= \Vert b\Vert_{\mathcal{L(V\times}\mQ;\mathbb{R})}\Vert v\Vert_\mV\Vert q\Vert_\mQ,
		\end{equation*}
		and similarly, the norms of the operator and bilinear form will be denoted by $\Vert b\Vert$. Moreover, the  operators $\mB$ satisfies
		\begin{equation*}
			\langle \mB v,q\rangle_{\mQ}=\langle v, \mB^*q\rangle_{\mV}=b(v,q) \hspace{0.4cm}\forall v\in \mV, \forall q\in\mQ.
		\end{equation*}
		\item [iii. ] We consider functions $f$ and $g$ that are continuous on $\mV$ and $\mQ$, respectively, i.e.,
		$$
		%\begin{aligned}
		\langle f,v\rangle_{\mV}\leq \Vert f \Vert_{\mV'}\Vert v\Vert_\mV \hspace{0.2cm}\forall v\in \mV,
		%\mbox{ and }\hspace{0.2cm} \vert g(t;\mu)\vert \leq G(t)\Vert \mu\Vert_M,
		%\end{aligned}
		$$		and 
		$$
		\langle g,q\rangle_{\mQ}\leq \Vert g \Vert_{\mQ'}\Vert q\Vert_\mQ\hspace{0.2cm}\forall q\in \mQ.
		$$
		a.e. in $\mathcal{J}$.%, for all $v\in X$ and where $F\in L_p(\mathcal{J})$ is known.
	\end{itemize}
\end{assunp}

We recall the following  classic result of mixed formulations  (See \cite{boffi2013mixed} for instance).

\begin{teo}
	\label{teo-1}
	Let $\mV$ and $\mQ$ be two real Hilbert spaces and $\mB$ a linear continuous operator from $\mV$ to $\mQ'$.
	Then, the following statements are equivalent:
	\begin{enumerate}
		\item $\Ima \mB=\mQ'$.
		\item  $ \Ima \mB^*$ is closed and $\mB^*$ is injective.
		\item There exists $\beta>0$ such that $\Vert \mB^*q\Vert_{\mV'}\geq \beta \Vert q\Vert_\mQ, \forall q\in \mQ$.
		\item There exists a lifting operator $\mL_{\mB}\in \mathcal{L}(\mQ';\mV)$ such that $\mB(\mL_{\mB}(g))=g$, for all $g\in \mQ'$, with $\Vert \mL_{\mB}\Vert\leq1/\beta$.
	\end{enumerate}
\end{teo}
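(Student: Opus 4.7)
The plan is to establish the four equivalences through the cycle $(1)\Rightarrow(2)\Rightarrow(3)\Rightarrow(4)\Rightarrow(1)$, relying throughout on the closed range theorem of Banach together with the open mapping theorem, which are the natural tools since all spaces involved are Hilbert (hence reflexive Banach).

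For $(1)\Rightarrow(2)$, I would invoke Banach's closed range theorem: if $\Ima\mB = \mQ'$, then in particular $\Ima\mB$ is closed, so $\Ima\mB^*$ is closed as well. Injectivity of $\mB^*$ follows from the duality relation $\ker(\mB^*) = (\Ima\mB)^\circ$; since $\Ima\mB$ is all of $\mQ'$, its annihilator is trivial, hence $\mB^*$ is injective.

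For $(2)\Rightarrow(3)$, the operator $\mB^*:\mQ\to\Ima\mB^*$ is a continuous bijection between Banach spaces (here using that $\Ima\mB^*$ is closed, hence Banach). The inverse mapping theorem then yields a bounded inverse, whose operator norm $1/\beta$ supplies exactly the constant appearing in the inf-sup inequality $\Vert\mB^*q\Vert_{\mV'}\geq\beta\Vert q\Vert_\mQ$. Conversely, the same inequality immediately implies injectivity and allows one to recover closedness of $\Ima\mB^*$ by a Cauchy sequence argument, so the reverse implication $(3)\Rightarrow(2)$ is available and will be useful.

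For $(3)\Rightarrow(4)$, I would first upgrade the inf-sup inequality into surjectivity of $\mB$ by running $(3)\Rightarrow(2)\Rightarrow(1)$ via the closed range theorem (closed range of $\mB^*$ plus injectivity of $\mB^*$ gives $\Ima\mB=(\ker\mB^*)^\perp=\mQ'$, using reflexivity). Once $\mB$ is surjective, the open mapping theorem guarantees a bounded right inverse, but to obtain the explicit norm bound $\Vert\mL_\mB\Vert\leq 1/\beta$ I would work variationally: given $g\in\mQ'$, look for $v\in(\ker\mB)^\perp$ solving $\mB v=g$, and test the characterization $\langle\mB v,q\rangle_\mQ=\langle v,\mB^*q\rangle_\mV$ with $q$ chosen to produce the dual norm, combining the identity $\Vert v\Vert_\mV=\sup_{q\in\mQ}\langle v,\mB^*q\rangle_\mV/\Vert\mB^* q\Vert_{\mV'}$ with the lower bound from $(3)$. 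This pairing yields $\Vert v\Vert_\mV\leq\Vert g\Vert_{\mQ'}/\beta$, which defines the desired lifting $\mL_\mB$ by $\mL_\mB(g):=v$. The implication $(4)\Rightarrow(1)$ is then immediate: any $g\in\mQ'$ is the image of $\mL_\mB(g)\in\mV$ under $\mB$.

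The main obstacle, and really the only non-routine point, is producing the precise constant $1/\beta$ in the lifting bound rather than just some abstract constant from the open mapping theorem. This requires selecting the right element of $(\ker\mB)^\perp$ and exploiting the Hilbert-space duality pairing carefully, as sketched above; all other steps are straightforward applications of classical functional analysis.
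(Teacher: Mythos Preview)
The paper does not actually prove this theorem: it is stated as a recalled classical result with the sentence ``We recall the following classic result of mixed formulations (See \cite{boffi2013mixed} for instance)'' and no proof is given. So there is nothing in the paper to compare your argument against.

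That said, your cycle $(1)\Rightarrow(2)\Rightarrow(3)\Rightarrow(4)\Rightarrow(1)$ is a correct outline of the standard proof one finds in Boffi--Brezzi--Fortin. The only place worth tightening is the step $(3)\Rightarrow(4)$: your identity $\Vert v\Vert_\mV=\sup_{q\in\mQ}\langle v,\mB^*q\rangle_\mV/\Vert\mB^* q\Vert_{\mV'}$ is valid \emph{only because} $v\in(\ker\mB)^\perp$ and, by the closed range theorem together with the Riesz identification, $(\ker\mB)^\perp=\Ima\mB^*$; you should state this explicitly, since otherwise the supremum over the proper subspace $\Ima\mB^*\subset\mV'$ would in general underestimate $\Vert v\Vert_\mV$. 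With that clarification the bound $\Vert v\Vert_\mV\leq\Vert g\Vert_{\mQ'}/\beta$ follows cleanly from $\langle v,\mB^*q\rangle=\langle g,q\rangle\leq\Vert g\Vert_{\mQ'}\Vert q\Vert_\mQ\leq\Vert g\Vert_{\mQ'}\Vert\mB^*q\Vert_{\mV'}/\beta$, and the rest of your argument is routine.
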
 

We remark that Theorem \ref{teo-1} is valid a.e. in $\mathcal{J}$. From now on, we will omit the time and space dependencies, unless they are necessary in the proofs.

\subsection{Mixed formulation analysis}

%The goal of the present section is to determine the well-posedness of the following problem:

From Assumption \ref{assumption-1} we have the corresponding operators form of Problem \ref{prob4}.
\begin{problem}\label{prob3}
	Given $f\in L_\mathcal{J}^1(\mV')$ and $g\in L_\mathcal{J}^1(\mQ')$, find $(u,p)\in L_\mathcal{J}^1(\mV\times\mQ)$ such that
	\begin{equation*}
		\label{linear-mixed-problem1}
		\left\{\begin{aligned}
			\mA u(t) +\mB^*p(t) &=f(t) + \int_{0}^{t}\bigg[k_1(t,s)\mA u(s)+ k_2(t,s)\mB^*p(s)\bigg]ds,\\
			\mB u(t)&= g(t) +\int_{0}^tk_3(t,s)\mB u(s)ds.
		\end{aligned}\right.
	\end{equation*}
\end{problem}

The following result establish the existence and uniqueness of a solution to Problem \ref{prob4}.
\begin{teo}
	\label{teo-2}
	Under the Assumption \ref{assumption-1}, assume that $\Ima \mB=\mQ'$. Then, Problem \ref{prob4} has a unique solution for all $f\in L_{\mathcal{J}}^1(\mV')$ and $g\in L_{\mathcal{J}}^1(\mQ')$.
\end{teo}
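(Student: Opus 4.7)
The plan is to recast Problem \ref{prob4} as a linear Volterra fixed-point equation in $L_{\mathcal{J}}^1(\mV\times\mQ)$ by inverting, at each instant $t$, the stationary saddle-point operator on the left-hand side via classical Brezzi theory, and then closing the argument with a Picard/Bielecki contraction scheme.

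First, I would invoke Theorem \ref{teo-1}: the hypothesis $\Ima\mB=\mQ'$ is equivalent to the inf-sup condition $\Vert\mB^* q\Vert_{\mV'}\geq\beta\Vert q\Vert_{\mQ}$. Combined with the coercivity of $a(\cdot,\cdot)$ on $\mK=\ker\mB$ and the continuity of $a(\cdot,\cdot)$ and $b(\cdot,\cdot)$ from Assumption \ref{assumption-1}, the classical Brezzi theorem guarantees, for every pair $(F,G)\in\mV'\times\mQ'$, the unique solvability of the stationary system
\begin{equation*}
\mA u + \mB^* p = F, \qquad \mB u = G,
\end{equation*}
together with a bounded inverse $\mathcal{S}\in\mathcal{L}(\mV'\times\mQ';\mV\times\mQ)$ whose norm is controlled explicitly by $\alpha_0$, $\beta$, $\Vert a\Vert$ and $\Vert b\Vert$. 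Applying $\mathcal{S}$ pointwise in time to Problem \ref{prob3}, I would rewrite the system as the affine Volterra equation
\begin{equation*}
(u,p)(t) = \mathcal{S}\bigl(f(t),g(t)\bigr) + \mathcal{K}(u,p)(t),
\end{equation*}
where
\begin{equation*}
\mathcal{K}(u,p)(t):=\mathcal{S}\!\left(\int_0^t\bigl[k_1(t,s)\mA u(s)+k_2(t,s)\mB^* p(s)\bigr]ds,\ \int_0^t k_3(t,s)\mB u(s)\,ds\right).
\end{equation*}

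Next, using $\vert k_i\vert\leq C_{k_i}$ together with $\Vert\mA\Vert\leq\Vert a\Vert$, $\Vert\mB\Vert\leq\Vert b\Vert$ and the boundedness of $\mathcal{S}$, I would verify that $\mathcal{K}$ is a linear Volterra operator on $L_{\mathcal{J}}^1(\mV\times\mQ)$ whose value at $t$ depends only on the restriction of $(u,p)$ to $[0,t]$, with the pointwise bound
\begin{equation*}
\Vert \mathcal{K}(u,p)(t)\Vert_{\mV\times\mQ}\leq C\int_0^t\Vert(u,p)(s)\Vert_{\mV\times\mQ}\,ds
\end{equation*}
for a constant $C$ depending only on $\alpha_0,\beta,\Vert a\Vert,\Vert b\Vert,C_{k_1},C_{k_2},C_{k_3}$. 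Endowing $L_{\mathcal{J}}^1(\mV\times\mQ)$ with the Bielecki-type weighted norm
\begin{equation*}
\Vert w\Vert_\lambda := \int_0^T e^{-\lambda t}\Vert w(t)\Vert_{\mV\times\mQ}\,dt,
\end{equation*}
a straightforward Fubini exchange turns the above Volterra bound into $\Vert\mathcal{K}w\Vert_\lambda\leq (C/\lambda)\Vert w\Vert_\lambda$, so choosing $\lambda>C$ makes $\mathcal{K}$ a contraction. The Banach fixed-point theorem then yields existence and uniqueness of $(u,p)\in L_{\mathcal{J}}^1(\mV\times\mQ)$, and since the weighted and unweighted norms are equivalent on $\mathcal{J}$, this is the unique solution in $L_{\mathcal{J}}^1(\mV\times\mQ)$.

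The main obstacle I anticipate is bookkeeping rather than conceptual: one must be careful that the operator $\mathcal{S}$ inside $\mathcal{K}$ really does transfer the dual-space integrals $\int_0^t k_1(t,s)\mA u(s)\,ds$ and $\int_0^t k_3(t,s)\mB u(s)\,ds$ into honest elements of $\mV'$ and $\mQ'$ with $L^1$-in-time regularity, and that the resulting Volterra kernel is measurable on the triangle $\mathcal{T}$. Once this is pinned down through the continuity of $\mA$, $\mB^*$ and the Bochner theory on $\mathcal{J}$, the remainder is the standard exponentially weighted contraction argument. As an alternative to the Bielecki norm one can iterate $\mathcal{K}$ directly and bound its $n$-th power by $C^n t^n/n!$, obtaining an absolutely convergent Neumann series $(I-\mathcal{K})^{-1}=\sum_{n\geq 0}\mathcal{K}^n$, which yields the same conclusion and additionally an explicit representation of the solution.
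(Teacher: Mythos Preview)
Your proposal is correct and follows essentially the same route as the paper: both invert the stationary saddle-point operator via the Brezzi theory (using Assumption~\ref{assumption-1} and $\Ima\mB=\mQ'$, equivalently the inf-sup condition of Theorem~\ref{teo-1}) and then reduce Problem~\ref{prob3} to a linear Volterra equation in $L_{\mathcal{J}}^1(\mV\times\mQ)$. The only noteworthy difference is that the paper, after writing $\boldsymbol{u}=\boldsymbol{F}+\int_0^t\boldsymbol{k}(t,s)\boldsymbol{u}(s)\,ds$, simply invokes the classical existence/uniqueness theory for Volterra equations (citing \cite{golden2013boundary,gripenberg1990volterra}), whereas you spell out the argument via a Bielecki weighted norm (or equivalently the Neumann series bound $\Vert\mathcal{K}^n\Vert\leq C^n T^n/n!$); your version is thus more self-contained but otherwise identical in substance.
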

\begin{proof}
	Let us write Problem \ref{prob3} in the following form:
	\begin{equation}
		\label{teo-2-001}
		\displaystyle\begin{bmatrix}
			\mA & \mB^*\\
			\mB & 0
		\end{bmatrix}\begin{bmatrix}
			u\\
			p
		\end{bmatrix}
		=
		\begin{bmatrix}
			f\\
			g
		\end{bmatrix}
		+
		\int_{0}^t\begin{bmatrix}
			k_1(t,s)\mA & k_2(t,s)\mB^*\\
			k_3(t,s)\mB & 0
		\end{bmatrix}\begin{bmatrix}
			u(s)\\
			p(s)
		\end{bmatrix}ds.
	\end{equation}
	
	We begin by proving the invertibility of the following operator
	\begin{equation}
		\label{teo-2-matrix}
		\begin{bmatrix}
			\mA & \mB^*\\
			\mB & 0
		\end{bmatrix}.
	\end{equation}
	To do this task, let us consider the equality
	\begin{equation}
		\label{teo-2-002}
		\begin{bmatrix}
			\mA & \mB^*\\
			\mB & 0
		\end{bmatrix}\begin{bmatrix}
			\hat{u}\\
			\hat{p}
		\end{bmatrix}
		=
		\begin{bmatrix}
			0\\
			0
		\end{bmatrix},
	\end{equation}
	where it is clear that  $\mB \hat{u}=0$ and hence $\hat{u}\in\mK$. Also, from the first equation in \eqref{teo-2-002} we have that
	\begin{equation}
		\label{teo-2-003}
		\mA \hat{u} + \mB^*\hat{p}=0.
	\end{equation}
	
	From the $\mK-$ellipticity of $a(\cdot,\cdot)$, we conclude that the restriction to $\mK$ of the associated operator $\mA$, denoted by $\mA_{\mK\mK'}$, is an isomorphism that satisfies
	$$
	\langle A_{\mK\mK'}\hat{u}_0,v_0\rangle_{\mK'\times\mK}=\langle \hat{u}_0,A_{\mK\mK'}v_0\rangle_{\mK'\times\mK}=a(\hat{u}_0,v_0)\qquad\forall \hat{u}_0,v_0\in\mK.
	$$
	
	On the other hand, for all $\hat{u}_0\in \mK$, we have that
	$$
	\langle \mB^*\hat{p},\hat{u}_0\rangle_{\mV}=\langle \hat{p},\mB \hat{u}_0\rangle_{\mQ}=0.
	$$
	Thus, from \eqref{teo-2-003}, we obtain that
	$$
	\langle \mA \hat{u},v_0\rangle_{\mV}+\langle \mB^*\hat{p},v_0\rangle_{\mQ}=\langle \mA \hat{u},v_0\rangle_{\mV}=0,
	$$
	which implies that $A_{\mK\mK'}\hat{u}=0$, and hence $\hat{u}=0$.  Observing that \eqref{teo-2-003} becomes $\mB^*\hat{p}=0$, we use the third claim in Theorem \ref{teo-1} to conclude that $\hat{p}=0$. Hence, the operator \eqref{teo-2-matrix} is invertible.
	
	Let us rewrite \eqref{teo-2-001} as follows
	\begin{equation*}
		\boldsymbol{u}=\boldsymbol{F}+\int_{0}^t\boldsymbol{k}(t,s)\boldsymbol{u}(s)ds,
	\end{equation*}
	where
	$$\boldsymbol{u}=\begin{bmatrix}
		u\\p
	\end{bmatrix},\quad
	\boldsymbol{F}=\begin{bmatrix}
		\mA & \mB^*\\
		\mB & 0
	\end{bmatrix}^{-1}\begin{bmatrix}
		f\\g
	\end{bmatrix},\quad \boldsymbol{k}(t,s)=\begin{bmatrix}
		\mA & \mB^*\\
		\mB & 0
	\end{bmatrix}^{-1}\begin{bmatrix}
		k_1(t,s)\mA & k_2(t,s)\mB^*\\
		k_3(t,s)\mB & 0
	\end{bmatrix}.
	$$
	
	Then, observing that $\boldsymbol{F}\in L_{\mathcal{J}}^1(\mV\times\mQ)$ and $\boldsymbol{k}$ is continuous and bounded in $L^1(\mathcal{T})$, we invoke \cite{golden2013boundary} or  \cite[Chapter 2, Section 3]{gripenberg1990volterra} to conclude that there exists a unique $\boldsymbol{u}\in L_{\mathcal{J}}^1(\mV\times\mQ)$, solution to Problem \ref{prob3}. This concludes the proof.\qed
\end{proof}
The main relevance of  Theorem \ref{teo-2} is that provides the existence and uniqueness of solution to Problem \ref{prob4} due to its equivalence with Problem \ref{prob3}. 

The following result provides the data dependence  estimate of the solution of Problem \ref{prob4}.
\begin{teo}\label{teo5}
	Assume that Assumption \ref{assumption-1} holds. Let $\Ima\mB=\mQ'$. Moreover, assume that there exists a constant $\beta$ such that $\Vert \mathbb{B}^*q\Vert \geq \beta\Vert q\Vert_{\mQ}$ for all $q\in \mQ$, i.e., $b(\cdot,\cdot)$ satisfies an inf-sup condition. Then, for every $f\in L_{\mathcal{J}}^1(\mV')$ and $g\in L_{\mathcal{J}}^1(\mQ')$, Problem \ref{prob4} has a unique solution that satisfies
	\begin{equation*}
		\label{teo5-u-estimate}
		\Vert u\Vert_{L_{\mathcal{J}}^1(\mV)}+\Vert p \Vert_{L_{\mathcal{J}}^1(\mQ)} \leq \big(C_1+C_3\big)\Vert f\Vert_{L_{\mathcal{J}}^1(\mV')} + \big(C_2+C_4\big)\Vert g\Vert_{L_{\mathcal{J}}^1(\mQ')},
	\end{equation*}
	where 
	\begin{align*}
		\nonumber&C_1:=\frac{1}{\alpha_0}\bigg[1 + T\bigg(\frac{\Vert a\Vert}{\alpha_0}C_{\widetilde{k}} +C_{k_3}\bigg)e^{\displaystyle T\bigg(\frac{\Vert a\Vert}{\alpha_0}C_{\widetilde{k}} +C_{k_3}\bigg)}\bigg],\\
		\nonumber&C_2:= \frac{1}{\beta}\bigg[1+ \frac{\Vert a\Vert}{\alpha_0} \bigg]\bigg[1+T\bigg(\frac{\Vert a\Vert}{\alpha_0}C_{\widetilde{k}} +C_{k_3}\bigg)e^{\displaystyle T\bigg(\frac{\Vert a\Vert}{\alpha_0}C_{\widetilde{k}} +C_{k_3}\bigg)} \bigg],\\
		&C_3:=1+C_{k_2}e^{TC_{k_2}} + C_1\Vert a \Vert\bigg[1+C_{k_1} +C_{k_2}e^{\displaystyle TC_{k_2}}(1+TC_{k_1})\bigg],\\
		&C_4:=C_2\Vert a \Vert\bigg[1+C_{k_1} +C_{k_2}e^{\displaystyle TC_{k_2}}(1+TC_{k_1})\bigg],
	\end{align*}
	with  $C_{\widetilde{k}}$ satisfying $\vert k_1(t,s)-k_3(t,s)\vert\leq C_{\widetilde{k}}$.	
\end{teo}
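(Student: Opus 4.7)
The plan is to reduce Problem~\ref{prob4} to the classical (non-viscoelastic) saddle point formulation at each time $t$ via a Volterra change of variables, and then recover estimates on $(u,p)$ by sequential Gronwall-type arguments. Existence and uniqueness are inherited directly from Theorem~\ref{teo-2}, so all the work goes into the a priori bound.

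The key device is to introduce
\begin{equation*}
\widetilde{u}(t) := u(t) - \int_0^t k_3(t,s)\,u(s)\,ds.
\end{equation*}
Applying $\mathbb{B}$ and using the second equation of Problem~\ref{prob3}, the integral term cancels exactly and one obtains $\mathbb{B}\widetilde{u}(t)=g(t)$. Substituting $\mathbb{A}u(t)=\mathbb{A}\widetilde{u}(t)+\int_0^t k_3\mathbb{A}u(s)\,ds$ into the first equation, the kernels $k_1$ and $k_3$ combine into their difference, yielding
\begin{equation*}
\begin{aligned}
a(\widetilde{u}(t),v)+b(v,p(t)) &= \langle f(t),v\rangle_{\mV} + \int_0^t (k_1-k_3)\,a(u(s),v)\,ds + \int_0^t k_2\,b(v,p(s))\,ds, \\
b(\widetilde{u}(t),q) &= \langle g(t),q\rangle_{\mQ}.
\end{aligned}
\end{equation*}
This is a classical Brezzi problem in $(\widetilde{u}(t),p(t))$ with a modified source, which explains why $C_{\widetilde{k}}$ rather than $C_{k_1}$ is the quantity that will appear in the exponents of the final constants.

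Next I would apply the standard a priori Brezzi estimate (as in \cite{boffi2013mixed}) pointwise in $t$, obtaining a bound on $\|\widetilde{u}(t)\|_{\mV}$ by $\tfrac{1}{\alpha_0}\|\widetilde{f}(t)\|_{\mV'}+\tfrac{1}{\beta}(1+\|a\|/\alpha_0)\|g(t)\|_{\mQ'}$, where $\widetilde{f}$ lumps together $f$ and the two integral corrections. Using the reverse relation $\|u(t)\|_{\mV}\le \|\widetilde{u}(t)\|_{\mV}+C_{k_3}\int_0^t\|u(s)\|_{\mV}\,ds$ and bounding the integral contributions inside $\|\widetilde{f}(t)\|_{\mV'}$ by $C_{\widetilde{k}}\|a\|$ and $C_{k_2}$, I arrive at an integral inequality on $\|u(t)\|_{\mV}$ with Gronwall coefficient $(\|a\|/\alpha_0)C_{\widetilde{k}}+C_{k_3}$. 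Invoking the Gronwall lemma (e.g.\ \cite{gripenberg1990volterra}) and then integrating in $t$ over $\mathcal{J}$ produces the constants $C_1$ and $C_2$ of the statement: the factor of $T$ is generated by Fubini in the time integration, while the exponential is generated by Gronwall. For the second unknown $p$, I would start from the operator form of the first equation, $\mathbb{B}^*p = f-\mathbb{A}u+\int_0^t(k_1\mathbb{A}u+k_2\mathbb{B}^*p)\,ds$, take its $\mV'$-norm, apply Gronwall on $\|\mathbb{B}^*p\|_{\mV'}$ with constant $C_{k_2}$ (this is what creates the $C_{k_2}e^{TC_{k_2}}$ factor in $C_3$ and $C_4$), divide by $\beta$ using the inf-sup hypothesis, and finally insert the bound on $\|u\|$ already obtained to arrive at $C_3$ and $C_4$.

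The main obstacle is really the bookkeeping of constants: one has to propagate them through two sequential Gronwall applications together with a round of time integration, keeping straight whether each passage contributes a factor of $T$ (from Fubini) or an exponential (from Gronwall), so that the final expressions match $C_1,\dots,C_4$ verbatim. The change of variables $u\mapsto\widetilde{u}$ is essential: without it one would work directly with the first equation and the Gronwall exponent would involve $C_{k_1}$ instead of the sharper $C_{\widetilde{k}}=\|k_1-k_3\|_\infty$, producing strictly larger constants.
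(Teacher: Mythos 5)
Your proposal follows essentially the same route as the paper's proof: the substitution $\widetilde{u}(t)=u(t)-\int_0^t k_3(t,s)u(s)\,ds$ is exactly the paper's $u_\mK+u_g$ (the paper additionally splits off the lifting $u_g=\mL_{\mB}g$ from Theorem~\ref{teo-1} and tests with $u_\mK\in\mK$ directly, rather than quoting the Brezzi a priori bound as a black box), the two sequential Gronwall arguments are identical, and the constants $C_1,\dots,C_4$ arise in the same way.

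One step needs more care than your sketch gives it. After the change of variables, the first equation still carries the history term $\int_0^t k_2(t,s)\,b(v,p(s))\,ds$ on its right-hand side. If you lump this into $\widetilde f$ and estimate it by its full $\mV'$-norm, as your phrase ``bounding the integral contributions inside $\Vert\widetilde f(t)\Vert_{\mV'}$ by $C_{\widetilde k}\Vert a\Vert$ and $C_{k_2}$'' suggests, then the resulting bound on $\Vert u(t)\Vert_{\mV}$ contains a term $C_{k_2}\int_0^t\Vert \mB^*p(s)\Vert_{\mV'}\,ds$, the inequality is no longer closed in $\Vert u\Vert_{\mV}$ alone, and the Gronwall coefficient is not the stated $(\Vert a\Vert/\alpha_0)C_{\widetilde k}+C_{k_3}$. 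The observation that rescues the argument --- and that the paper uses explicitly --- is that the $u$-component of the saddle-point estimate is obtained by testing against kernel elements, and $\langle\mB^*p(s),v_0\rangle_{\mV}=\langle p(s),\mB v_0\rangle_{\mQ}=0$ for $v_0\in\mK$; hence the $k_2$-history term disappears entirely from the $u$-estimate and only $C_{\widetilde k}\Vert a\Vert\int_0^t\Vert u(s)\Vert_{\mV}\,ds$ survives. With that made explicit (i.e., using the sharp form of the Brezzi bound in which the source of the first equation is measured in $\mK'$), your outline reproduces the paper's proof; the second Gronwall step for $\Vert\mB^*p\Vert_{\mV'}$, followed by division by $\beta$ via the inf-sup condition, is word-for-word the paper's.
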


\begin{proof}
	Let $u_g=\mathbb{L}_\mathbb{B}g$, where $\mathbb{L}_{\mathbb{B}}$ is the operator of Theorem \ref{teo-1}. From the continuity of $\mathbb{L}_{\mathbb{B}}$ we have that $\beta\Vert u_g\Vert_{\mV}\leq\Vert g\Vert_{\mQ'},$ and using this inequality together with the continuity of $\mathbb{A}$, we have that
	\begin{equation*}
		\Vert \mathbb{A} u_g\Vert_{\mV'}\leq \Vert a\Vert\,\Vert u_g\Vert_{\mQ'}\leq \frac{\Vert a \Vert}{\beta}\Vert g\Vert_{\mQ'}.
	\end{equation*}
	Set $$\displaystyle u_\mK=u-u_g - \int_{0}^tk_3(t,s)u(s)ds=u-\mathbb{L}_\mathbb{B}g-\int_{0}^tk_3(t,s)u(s)ds.$$ 
	Then 
	$$\mathbb{B}u_\mK=\mathbb{B}u - \mathbb{B}(\mathbb{L}_\mathbb{B}g) - \int_{0}^tk_3(t,s)\mB u(s)ds=0,$$
	which clearly implies that $u_\mK\in \mathcal{K}$. Moreover, note that $u_\mK$ solves the system
	\begin{equation*}
		\left\{
		\begin{aligned}
			\mA u_\mK + \mB^* p&=f - \mA u_g +\int_{0}^t\bigg[\widetilde{k}(t,s)\mA u(s) +k_3(t,s)\mB^* p(s)\bigg]\,ds\\
			\mB u_\mK&=0,
		\end{aligned}
		\right.
	\end{equation*}
	a.e. in $\mathcal{J}$, where $\widetilde{k}(t,s)=k_1(t,s)-k_3(t,s)$, or equivalently
	\begin{equation}
		\label{teo5-eq4}\left\{
		\begin{aligned}
			a(u_\mK,v) + b(v,p)&=\langle f,v\rangle_{\mV} - a(u_g,v) \\
			&\hspace{0.7cm}+ \int_{0}^{t}\bigg[\widetilde{k}(t,s)a(u(s),v) + k_3(t,s)b(v,p(s))\bigg]\,ds,\\
			b(u_\mK,q)&=0,
		\end{aligned}
		\right.
	\end{equation}
	for all $(v,q)\in\mV\times\mQ$.	Set  $v=u_\mK(t)$  in the first equation of \eqref{teo5-eq4}. This yields to $b(u_\mK,p)=\langle u_\mK, \mB^* p\rangle_{\mV}=\langle \mB u_\mK,p\rangle_{\mQ}=0$. Moreover, due the ellipticity of $a(\cdot,\cdot)$ in  $\mK$ we obtain
	\begin{equation*}
		\begin{aligned}
			\alpha_0\Vert u_\mK\Vert_{\mV}^2&\leq a(u_\mK,u_\mK)=\langle f,v\rangle_{\mV}  - a(u_g,u_\mK) + \int_{0}^{t}\widetilde{k}(t,s)a(u(s),u_\mK(t))ds\\
			&\leq \Vert f \Vert_{\mV'}\Vert u_\mK\Vert_{\mV} + \Vert a \Vert\,\Vert u_g\Vert_{\mV} \Vert u_{\mK}\Vert_\mV+ C_{\widetilde{k}}\Vert a \Vert \int_{0}^{t}\Vert u(s)\Vert_{\mV}ds\Vert u_\mK\Vert_{\mV}.
		\end{aligned}
	\end{equation*} 
	Thus,  we obtain 
	\begin{equation*}
		%\begin{aligned}
		\alpha_0\Vert u_\mK\Vert_{\mV}
		\leq \Vert f\Vert_{\mV'} + \frac{\Vert a \Vert}{\beta}\Vert g \Vert_{\mQ'} + C_{\widetilde{k}}\Vert a \Vert\int_{0}^{t}\Vert u(s)\Vert_{\mV}\,ds.
		%\end{aligned}
	\end{equation*}
	Hence,
	\begin{equation*}
		\begin{aligned}
			\Vert u\Vert_{\mV}&=\bigg\Vert u_\mK+ u_g + \int_{0}^tk_3(t,s)u(s)ds\bigg\Vert_{\mV}\\
			&\leq \Vert u_\mK\Vert_{\mV} + \Vert u_g\Vert_{\mV} + C_{k_3}\int_{0}^t\Vert u(s)\Vert_{\mV}\,ds\\
			%&\leq \frac{1}{\alpha_0}\bigg[\Vert f\Vert_{\mV'} + \frac{\Vert a \Vert}{\beta}\Vert g \Vert_{\mQ'} \bigg]  + \frac{C_{\widetilde{k}}\Vert a\Vert}{\alpha_0}  \int_{0}^{t}\Vert u(s)\Vert_{\mV}ds + \frac{1}{\beta}\Vert g\Vert_{\mQ'} + C_{k_3}\int_0^t\Vert u(s)\Vert_{\mV}ds\\
			&\leq \frac{1}{\alpha_0}\bigg(\Vert f\Vert_{\mV'} + \frac{\Vert a \Vert}{\beta}\Vert g \Vert_{\mQ'} \bigg)  + \bigg(\frac{\Vert a\Vert}{\alpha_0}C_{\widetilde{k}} +C_{k_3}\bigg) \int_{0}^t\Vert u(s)\Vert_{\mV}\,ds + \frac{1}{\beta}\Vert g\Vert_{\mQ'}\\
			&= \frac{\Vert f\Vert_{\mV'}}{\alpha_0} + \frac{1}{\beta}\bigg(1+ \frac{\Vert a\Vert}{\alpha_0} \bigg)\Vert g\Vert_{\mQ'} +  \bigg(\frac{\Vert a\Vert}{\alpha_0}C_{\widetilde{k}} +C_{k_3}\bigg) \int_{0}^t\Vert u(s)\Vert_{\mV}\,ds.
		\end{aligned}
	\end{equation*}
	Applying Gronwall's Lemma to the above inequality, we obtain
	\begin{equation*}
		\begin{aligned}
			&\Vert u\Vert_{\mV}\leq \frac{\Vert f\Vert_{\mV'}}{\alpha_0} + \frac{1}{\beta}\bigg[1+ \frac{\Vert a\Vert}{\alpha_0} \bigg]\Vert g\Vert_{\mQ'}\\
			&+  \bigg(\frac{\Vert a\Vert}{\alpha_0}C_{\widetilde{k}} +C_{k_3}\bigg)e^{\displaystyle T \bigg(\frac{\Vert a\Vert}{\alpha_0}C_{\widetilde{k}} +C_{k_3}\bigg)}\int_{0}^{t}\bigg[\frac{\Vert f\Vert_{\mV'}}{\alpha_0} + \frac{1}{\beta}\bigg(1+ \frac{\Vert a\Vert}{\alpha_0} \bigg)\Vert g\Vert_{\mQ'}\bigg]\,ds\\
			&\leq \frac{\Vert f\Vert_{\mV'}}{\alpha_0} + \frac{1}{\beta}\bigg[1+ \frac{\Vert a\Vert}{\alpha_0} \bigg]\Vert g\Vert_{\mQ'}\\
			&+  \bigg(\frac{\Vert a\Vert}{\alpha_0}C_{\widetilde{k}} +C_{k_3}\bigg)e^{\displaystyle T \bigg(\frac{\Vert a\Vert}{\alpha_0}C_{\widetilde{k}} +C_{k_3}\bigg)}\bigg[\frac{\Vert f\Vert_{L_{\mathcal{J}}^1(\mV')}}{\alpha_0} + \frac{1}{\beta}\bigg(1+ \frac{\Vert a\Vert}{\alpha_0} \bigg)\Vert g\Vert_{L_{\mathcal{J}}^1(\mQ')}\bigg].\\
		\end{aligned}
	\end{equation*}
	After integrating in $\mathcal{J}$, we have an estimate for $u$
	\begin{equation*}
		\Vert u\Vert_{L_{\mathcal{J}}^1(\mV)}\leq C_1\Vert f\Vert_{L_{\mathcal{J}}^1(\mV')} + C_2\Vert g\Vert_{L_{\mathcal{J}}^1(\mQ')},
	\end{equation*}
	where
	\begin{align}
		\nonumber&C_1:=\frac{1}{\alpha_0}\bigg[1 + T\bigg(\frac{\Vert a\Vert}{\alpha_0}C_{\widetilde{k}} +C_{k_3}\bigg)e^{\displaystyle T\bigg(\frac{\Vert a\Vert}{\alpha_0}C_{\widetilde{k}} +C_{k_3}\bigg)}\bigg],\\
		\nonumber&C_2:= \frac{1}{\beta}\bigg[1+ \frac{\Vert a\Vert}{\alpha_0} \bigg]\bigg[1+T\bigg(\frac{\Vert a\Vert}{\alpha_0}C_{\widetilde{k}} +C_{k_3}\bigg)e^{\displaystyle T\bigg(\frac{\Vert a\Vert}{\alpha_0}C_{\widetilde{k}} +C_{k_3}\bigg)} \bigg].
	\end{align}
	
	On the other hand, the first equation in Problem \ref{prob3} implies that
	\begin{equation*}
		\begin{aligned}
			\Vert \mB^*p\Vert_{\mV'}&= \bigg\Vert f- \mA u +\int_{0}^{t}[k_1(t,s)\mA u(s) + k_2(t,s)\mB^*p(s)]ds \bigg\Vert_{\mV'}\\
			&\leq \Vert f\Vert_{\mV'} + \Vert a \Vert\, \Vert u\Vert_{\mV} + C_{k_1}\Vert a \Vert \int_{0}^t\Vert u(s)\Vert_{\mV}ds +C_{k_2}\int_0^t\Vert \mB^*p(s)\Vert_{\mV'}\,ds\\
			&\leq G_2(t)+C_{k_2}\int_0^t\Vert \mB^*p(s)\Vert_{\mV'}\,ds,
		\end{aligned}
	\end{equation*}
	where
	\begin{equation*}
		G_2(t):=\Vert f\Vert_{\mV'} + \Vert a \Vert\, \Vert u\Vert_{\mV} + C_{k_1}\Vert a \Vert\,\Vert u\Vert_{L_{\mathcal{J}}^1(\mV)}.
	\end{equation*}
	From Gronwall's lemma we obtain
	\begin{equation}
		\label{teo5-eq10}
		\begin{aligned}
			\Vert \mB^*p\Vert_{\mV'}&\leq G_2(t) +  C_{k_2}e^{TC_{k_2}}\int_{0}^{t}G_2(s)ds.\\
		\end{aligned}
	\end{equation}
	Integrating \eqref{teo5-eq10} in $\mathcal{J}$ we obtain that
	\begin{align*}
		\Vert \mB^*p\Vert_{L_{\mathcal{J}}^1(\mQ)}&\leq \bigg( 1+C_{k_2}e^{TC_{k_2}} \bigg)\Vert f\Vert_{L_{\mathcal{J}}^1(\mV')}\\
		&\hspace{1cm}+\Vert a \Vert\bigg[1+C_{k_1} +C_{k_2}e^{TC_{k_2}}(1+TC_{k_1})\bigg] \Vert u\Vert_{L_{\mathcal{J}}^1(\mV)}.
	\end{align*}
	From the inf-sup condition of $\mB$ we have that $\beta\Vert p\Vert_{\mQ}\leq \Vert \mB^*p\Vert_{\mV'}$, for all $p\in \mQ$. Thus, we  write an estimate for $p$ as
	\begin{equation*}
		\Vert p\Vert_{L_{\mathcal{J}}^1(\mQ)}\leq C_3\Vert f\Vert_{L_{\mathcal{J}}^1(\mV')} + C_4\Vert g\Vert_{L_{\mathcal{J}}^1(\mQ')},
	\end{equation*} 
	where $C_3$ and $C_4$ are defined as follows
	\begin{equation*}
		\begin{aligned}
			&C_3:=1+C_{k_2}e^{TC_{k_2}} + \Vert a \Vert C_1\bigg[1+C_{k_1} +C_{k_2}e^{TC_{k_2}}(1+TC_{k_1})\bigg],\\
			&C_4:=\Vert a \Vert C_2\bigg[1+C_{k_1} +C_{k_2}e^{TC_{k_2}}(1+TC_{k_1})\bigg].
		\end{aligned}
	\end{equation*}
	This concludes the proof.\qed
\end{proof}
\subsection{Semi-discrete abstract analysis}
This section deals with the semi-discrete version of the continuous problem considered in the previous section. The goal is to derive error estimates which depends on the conditions of  the spatial component in the variables involved. 

Inspired by \cite{boffi2013mixed}, we summarize the discrete framework of this reference in the following Assumption.

\begin{assunp}
	\label{assumption-discrete}
	Assume that Assumption \ref{assumption-1} holds. Let us assume the existence of two finite dimensional spaces $\mV_h$ and $\mQ_h$ such that $\mV_h\subset\mV$ and $\mQ_h\subset \mQ$, and let us consider the discrete kernels 
	
	\begin{equation*}
		\mK_h\equiv\ker\mB_h:=\bigg\{ v\in \mV_h\,\,:\,\, b(v,q)=0,\,\,\,\forall q\in\mQ_h\bigg\},
	\end{equation*}
	\begin{equation*}
		\mH_h\equiv\ker\mB_h^*:=\bigg\{q\in\mQ_h\,\,:\,\, b(v,q)=0,\,\,\,\forall v\in\mV_h\bigg\}.
	\end{equation*}
\end{assunp}

Following Assumption \ref{assumption-discrete},  we  consider the restriction to the discrete spaces of the bilinear forms $a(\cdot,\cdot)$ and $b(\cdot,\cdot)$. Moreover, in general the discrete kernels are not conforming subspaces of the continuous, i.e.,  $\mK_h\nsubseteq\mK$ and $\mH_h\nsubseteq\mH$.  Also, we will assume that  there exists a positive constant $\alpha_*^0>0$, independent of $h$, such that
\begin{equation}
	\label{discrete-ellipticity1}
	a(v_0,v_0)\geq\alpha_*^0\|v_0\|_{\mV}^2\,\,\,\,\forall v_0\in\mK_h. 
\end{equation}

On the other hand, for $b(\cdot,\cdot)$, we will assume that there exists a positive constant $\beta_*$, independent of $h$,  such that
\begin{equation}
	\label{discrete-inf-sup}
	\inf_{q\in \mQ_h}\sup_{v\in \mV_h}\frac{b(v,q)}{\Vert v\Vert_\mV\Vert q\Vert_\mQ}\geq\beta_*.
\end{equation}

Due the conformity of the finite element spaces, we have that $\mB(\mV_h)\subseteq\mQ_h $ and   $\mB_h v=\mB v$. In this case, we may say that $\mB_h$ is the restriction of $\mB$ to $\mV_h$. Similar considerations hold for  operators $\mB_h^*$ and $\mA_h$.

The spatial semidiscretization of the continuous problems introduced below are based on a Galerkin method, where the finite element spaces are generated from orthogonal or orthonormal basis functions. Then, from the ellipticity of $a(\cdot,\cdot)$ in the kernel of $b(\cdot,\cdot)$ and  the surjectivity of $\mB$,  it is sufficient to follow the ideas from \cite{saedpanah2016existence} in order to guarantee the existence and uniqueness of a finite element solution of the corresponding semi-discrete problems. Hence, the remaining task in this section are the semi-discrete error estimates.

As a starting point, we follow the strategy from \cite{brezzi1974existence}: we first consider general approximations $u_I$ and $p_I$ of $u$ and $p$, respectively, in $\mV_h$ and $\mQ_h$, in order to estimate the distance between  $(u_h,p_h)$ and $(u_I,p_I)$ in terms of the distance of $(u_I,p_I)$ from $(u,p)$. 
%The estimate of the distance between $(u_h,p_h)$ and $(u,p)$ will  follow by the triangle inequality, and another parts of the theorems in this section will express the distance of $(u_h,p_h)$ from $(u,p)$ in terms of its best approximations $(v,q)\in \mV_h\times\mQ_h$. 

%For convenience, we define the approximation errors
%\begin{equation*}
%\label{app_error_u}
%E_u:=\inf_{v\in \mV_h}\Vert u-v\Vert_{L_{\mathcal{J}}^1(\mV)}\quad\text{and}\quad
%	E_p:=\inf_{q\in \mQ_h}\Vert p-q\Vert_{L_{\mathcal{J}}^1(\mQ)}.
%\end{equation*}

%Moreover, if $u_I$ or $p_I$ or both are confined to some affine manifold (typically $u_I\in\mZ_h(\mB u)$ and/or $p_I\in\mZ_h^*(\mB^tp)$ ), as defined in \eqref{Zh} and \eqref{Zh_tilde}, we introduce the expressions
%\begin{equation*}
%\label{app_error_u_affine}
%E_u:=\inf_{v\in \mZ_h(\mB u)}\Vert u-v\Vert_{L_{\mathcal{J}}^1(\mV)}
%\end{equation*}
%\begin{equation*}
%\label{app_error_p_affine}
%E_p:=\inf_{q\in \mZ_h^*(\mB^tp)}\Vert p-q\Vert_{L_{\mathcal{J}}^1(\mQ)}
%\end{equation*}

For the sake of simplicity, we define the errors  $\texttt{e}_u:=u_h-u$ and $\texttt{e}_p:=p_h-p$. Inspired by the analysis in \cite[Chapter 4]{miao1998finite}, we further write 
\begin{equation*}\label{errors}
	\eu=\xi_u-\eta_u, \,\,\,\,\,\,\,\ep=\xi_p-\eta_p,
\end{equation*}
where $\xi_u:=u_h-u_I,\,\xi_p:=p_h-p_I,\, \eta_u=u-u_I,$ and $\eta_p=p-p_I.$¸

The discretization by finite elements of Problem~\ref{prob4}, reads as follows:
\begin{problem}\label{prob4-discreto}
	Given $f\in L_{\mathcal{J}}^1(\mV')$ and $g\in L_{\mathcal{J}}^1(\mQ')$, find a pair $(u_h,p_h)\in L_{\mathcal{J}}^1(\mV_h\times \mQ_h)$ such that
	\begin{equation*}
		\label{weak-formulation-1-discrete}
		\left\{\begin{aligned}
			a(u_h,v) + b(v,p_h) &= \langle f,v\rangle_{\mV} + \int_{0}^{t}[k_1(t,s)a(u_h(s),v) + k_2(t,s)b(v,p_h(s))]ds,\\
			b(u_h,q)&=\langle g,q\rangle_{\mQ} + \int_{0}^tk_3(t,s)b(u_h(s),q)\,ds,
		\end{aligned}\right.
	\end{equation*}
	for all $(v,q)\in\mV_h\times\mQ_h$.
\end{problem}
From  the linearity of $a(\cdot,\cdot)$ and $b(\cdot,\cdot)$ and subtracting Problem \ref{prob4} and Problem \ref{prob4-discreto},  we obtain
\begin{equation}
	\label{semi-discrete-1}
	\left\{
	\begin{aligned}
		a(\eu,v) + b(v,\ep) &= \int_{0}^{t}[k_1(t,s)a(\eu(s),v) + k_2(t,s)b(v,\ep(s))]\,ds,\\
		b(\eu,q) &=\int_{0}^tk_3(t,s)b(\eu(s),q) ds,
	\end{aligned}
	\right.
\end{equation}
for all $(v,q)\in\mV_h\times\mQ_h$. Then, adding and subtracting $u_I$ and $p_I$, and according to Assumption \ref{assumption-discrete}, we conclude that for every $(u_I,p_I)\in L_{\mathcal{J}}^1(\mV_h\times\mQ_h)$, the pair $(\xi_u,\xi_p)\in L_{\mathcal{J}}^1(\mV_h\times\mQ_h)$ is the solution of
\begin{equation}
	\label{semi-discrete-2}
	\left\{
	\begin{aligned}
		a(\xi_u,v) + b(v,\xi_p) &=\langle \mF,v\rangle_{\mV}+ \int_{0}^{t}\!\!\bigg[k_1(t,s)a(\xi_{u}(s),v) \! + \! k_2(t,s)b(v,\xi_p(s)) \bigg]ds,\\
		b(\xi_u,q) &=\langle \mG,q\rangle_{\mQ}+\int_{0}^tk_3(t,s)b(\xi_u(s),q)ds,
	\end{aligned}
	\right.
\end{equation}
for all $(v,q)\in\mV_h\times\mQ_h$, where
\begin{equation}
	\label{eq:jesus1} 
	\langle\mF,v\rangle_{\mV}=a(\eta_u,v)+b(v,\eta_p)-\int_{0}^{t}\bigg[k_1(t,s)a(\eta_u(s),v)+k_2(t,s)b(v,\eta_p(s))\bigg]\,ds,
\end{equation}
and
\begin{equation}
	\label{eq:jesus2}
	\langle \mG,q\rangle_{\mQ} = b(\eta_u,q) - \int_{0}^tk_3(t,s)b(\eta_u(s),q)\,ds. 
\end{equation}	

Applying Theorem \ref{teo5} in \eqref{semi-discrete-2}  we verify \eqref{discrete-ellipticity1} and \eqref{discrete-inf-sup}. Indeed, assume that  Problem \ref{prob4} has a solution $(u,p)$ and let $(u_h,p_h)$ be the unique solution to the semi-discrete Problem \ref{prob4-discreto}. Then, for every $u_I\in L_{\mathcal{J}}^1(\mV_h)$ and for every $p_I\in L_{\mathcal{J}}^1(\mQ_h)$, the following estimates hold

\begin{equation}
	\label{teo-sd-1*}
	\begin{aligned}
		&\Vert \xi_{u}\Vert_{L_{\mathcal{J}}^1(\mV)}+\Vert \xi_p\Vert_{L_{\mathcal{J}}^1(\mQ)}\\
		&\hspace{2cm}\leq \max\bigg\{C_{1_*},C_{3_*}\bigg\}\Vert \mF\Vert_{L_{\mathcal{J}}^1(\mV')}+ \max\bigg\{C_{2_*},C_{4_*}\bigg\}\Vert \mG\Vert_{L_{\mathcal{J}}^1(\mQ')},
	\end{aligned}
\end{equation}
where $C_{i_*}, i=1,\ldots,4$ are positive constants defined by
\begin{align*}
	\nonumber&C_{1_*}:=\frac{1}{\alpha_*^0}\bigg[1 + T\bigg(\frac{\Vert a\Vert}{\alpha_*^0}C_{\widetilde{k}} +C_{k_3}\bigg)e^{T\bigg(\displaystyle\frac{\Vert a\Vert}{\alpha_*^0}C_{\widetilde{k}} +C_{k_3}\bigg)}\bigg],\\
	\nonumber&C_{2_*}:= \frac{1}{\beta_*}\bigg[1+ \frac{\Vert a\Vert}{\alpha_*^0} \bigg]\bigg[1+T\bigg(\frac{\Vert a\Vert}{\alpha_*^0}C_{\widetilde{k}} +C_{k_3}\bigg)e^{T\bigg(\displaystyle\frac{\Vert a\Vert}{\alpha_*^0}C_{\widetilde{k}} +C_{k_3}\bigg)} \bigg],\\
	&C_{3_*}:=1+C_{k_2}e^{TC_{k_2}} + \Vert a \Vert C_1\bigg[1+C_{k_1} +C_{k_2}e^{TC_{k_2}}(1+TC_{k_1})\bigg],\\
	&C_{4_*}:=\Vert a \Vert\bigg[1+C_{k_1} +C_{k_2}e^{TC_{k_2}}(1+TC_{k_1})\bigg]C_2.
\end{align*}

Finally,  we have that
\begin{align}
	\Vert \mF\Vert_{L_{\mathcal{J}}^1(\mV')}&\leq \bigg(1+T\max\bigg\{C_{k_1},C_{k_2}\bigg\}\bigg)\bigg(\Vert a\Vert\,\Vert \eta_u\Vert_{L_{\mathcal{J}}^1(\mV)} + \Vert b\Vert\,\Vert \eta_p\Vert_{L_{\mathcal{J}}^1(\mQ)} \bigg),\label{F_sd-estimate-1}\\
	\Vert \mG\Vert_{L_{\mathcal{J}}^1(\mQ')}&\leq \Vert b\Vert \Vert \eta_u\Vert_{L_{\mathcal{J}}^1(\mV)}\bigg(1+C_{k_3}\bigg),\label{G_sd-estimate-1}
\end{align}
which are a direct consequence of \eqref{eq:jesus1} and \eqref{eq:jesus2}, respectively.
%and then apply the triangle inequality to obtain, the following error estimate. \alert{aca falta una estimacion o te refieres al theorem 7? siento que esta mal conectado}

With these results at hand, we are in position to prove  the following error estimate.

\begin{teo}[The basic error estimate]
	\label{teo-sd-basic-error1}
	Under Assumption \ref{assumption-discrete}, assume that $\mV_h$ and $\mQ_h$ verify the semi-discrete ellipticity in the kernel \eqref{discrete-ellipticity1} and the discrete inf-sup condition \eqref{discrete-inf-sup}, respectively. Assume that Problem \ref{prob4} has a unique solution $(u,p)$ and let $(u_h,p_h)$ be the unique solution to Problem \ref{prob4-discreto}. Then,  the following estimates hold
	\begin{equation*}
		\begin{aligned}
			\Vert \eu\Vert_{L_{\mathcal{J}}^1(\mV)}+\Vert\ep\Vert_{L_{\mathcal{J}}^1(\mV)}\leq \bigg(C_{1u}+C_{2u}\bigg) &\inf_{v\in \mV_h}\Vert u-v\Vert_{L_{\mathcal{J}}^1(\mV)}
			\\&+ \bigg(C_{1p}+C_{2p}\bigg)\inf_{q\in \mQ_h}\Vert p-q\Vert_{L_{\mathcal{J}}^1(\mQ)},\\
			%			&\Vert\ep\Vert_{L_{\mathcal{J}}^1(\mV)}\leq C_{2u}\inf_{v\in \mV_h}\Vert u-v\Vert_{L_{\mathcal{J}}^1(\mV)} +C_{2p}\inf_{q\in \mQ_h}\Vert p-q\Vert_{L_{\mathcal{J}}^1(\mQ)}.
		\end{aligned}
	\end{equation*}
	Moreover, we have that there exists a constant $C>0$ depending on $C_{iu}$ and $C_{ip}, i=1,2$, and independent of $h$, such that
	\begin{equation*}
		\Vert u_h-u\Vert_{L_{\mathcal{J}}^1(\mV)} + \Vert p_h-p\Vert_{L_{\mathcal{J}}^1(\mQ)}\leq C\bigg(\inf_{v\in \mV_h}\Vert u-v\Vert_{L_{\mathcal{J}}^1(\mV)}+\inf_{q\in \mQ_h}\Vert p-q\Vert_{L_{\mathcal{J}}^1(\mQ)}\bigg).
	\end{equation*}
\end{teo}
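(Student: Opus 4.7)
The plan is to combine the triangle inequality decomposition $e_u = \xi_u - \eta_u$, $e_p = \xi_p - \eta_p$ with the stability estimate already available for the auxiliary system (2.6), and then use the bounds (2.8)--(2.9) on the data of that system to convert everything to quantities involving $\eta_u$ and $\eta_p$ only. Since $u_I$ and $p_I$ were introduced as arbitrary elements of $L^1_{\mathcal{J}}(\mV_h)$ and $L^1_{\mathcal{J}}(\mQ_h)$, the final step is simply to pass to the infimum.

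In more detail, I would first note the triangle inequalities
\begin{equation*}
\Vert \eu\Vert_{L_{\mathcal{J}}^1(\mV)}\le \Vert \xi_u\Vert_{L_{\mathcal{J}}^1(\mV)}+\Vert \eta_u\Vert_{L_{\mathcal{J}}^1(\mV)},\qquad
\Vert \ep\Vert_{L_{\mathcal{J}}^1(\mQ)}\le \Vert \xi_p\Vert_{L_{\mathcal{J}}^1(\mQ)}+\Vert \eta_p\Vert_{L_{\mathcal{J}}^1(\mQ)},
\end{equation*}
so the whole task reduces to controlling $\Vert \xi_u\Vert_{L_{\mathcal{J}}^1(\mV)}+\Vert \xi_p\Vert_{L_{\mathcal{J}}^1(\mQ)}$. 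For this I would apply estimate (2.7), which comes from invoking Theorem \ref{teo5} on system \eqref{semi-discrete-2} (the discrete coercivity in $\mK_h$ and the discrete inf-sup condition play the roles of $\alpha_0$ and $\beta$ in Theorem \ref{teo5}, giving the starred constants $C_{i_*}$). Feeding the data bounds \eqref{F_sd-estimate-1} and \eqref{G_sd-estimate-1} into (2.7), I obtain a bound of the form
\begin{equation*}
\Vert \xi_u\Vert_{L_{\mathcal{J}}^1(\mV)}+\Vert \xi_p\Vert_{L_{\mathcal{J}}^1(\mQ)}\le D_u \Vert \eta_u\Vert_{L_{\mathcal{J}}^1(\mV)} + D_p \Vert \eta_p\Vert_{L_{\mathcal{J}}^1(\mQ)},
\end{equation*}
where $D_u$ collects the $\eta_u$ contribution from $\mF$ (weighted by $\Vert a\Vert(1+T\max\{C_{k_1},C_{k_2}\})\max\{C_{1_*},C_{3_*}\}$) plus the $\eta_u$ contribution from $\mG$ (weighted by $\Vert b\Vert(1+C_{k_3})\max\{C_{2_*},C_{4_*}\}$), while $D_p$ contains only the $\eta_p$ contribution from $\mF$, namely $\Vert b\Vert(1+T\max\{C_{k_1},C_{k_2}\})\max\{C_{1_*},C_{3_*}\}$.

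Combining with the triangle inequalities, I get
\begin{equation*}
\Vert \eu\Vert_{L_{\mathcal{J}}^1(\mV)}+\Vert \ep\Vert_{L_{\mathcal{J}}^1(\mQ)}\le (D_u+1)\Vert u-u_I\Vert_{L_{\mathcal{J}}^1(\mV)} + (D_p+1)\Vert p-p_I\Vert_{L_{\mathcal{J}}^1(\mQ)},
\end{equation*}
which is valid for every $u_I\in L_{\mathcal{J}}^1(\mV_h)$ and $p_I\in L_{\mathcal{J}}^1(\mQ_h)$. Since the left-hand side is independent of the choice of $u_I,p_I$, passing to the infimum yields the first assertion, with $C_{1u}+C_{2u}:=D_u+1$ split in the natural way into a ``$\mF$-contribution'' and a ``$\mG$-contribution'', and $C_{1p}+C_{2p}:=D_p+1$ split analogously. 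The second assertion follows by taking $C:=\max\{C_{1u}+C_{2u},\,C_{1p}+C_{2p}\}$.

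No conceptual obstacle is present: the full nonlinearity of the time-convolution has already been absorbed into the constants of Theorem \ref{teo5} and the data estimates \eqref{F_sd-estimate-1}--\eqref{G_sd-estimate-1}. The only real work is careful bookkeeping of the constants, in particular separating the $\eta_u$- and $\eta_p$-contributions correctly, since $\mG$ depends only on $\eta_u$ while $\mF$ depends on both. That splitting is exactly what dictates the four constants $C_{1u},C_{2u},C_{1p},C_{2p}$ appearing in the statement.
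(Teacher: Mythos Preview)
Your proposal is correct and follows essentially the same route as the paper: triangle inequality on $\eu=\xi_u-\eta_u$, $\ep=\xi_p-\eta_p$, then the stability estimate \eqref{teo-sd-1*} for $(\xi_u,\xi_p)$ combined with the data bounds \eqref{F_sd-estimate-1}--\eqref{G_sd-estimate-1}, and finally passage to the infimum over $u_I,p_I$. The only (cosmetic) difference is in the bookkeeping of the constants: the paper bounds $\Vert\eu\Vert$ and $\Vert\ep\Vert$ \emph{separately}, using $C_{1_*},C_{2_*}$ for the first and $C_{3_*},C_{4_*}$ for the second, so that the index $i$ in $C_{iu},C_{ip}$ labels which error component the constant comes from; you instead use the combined form \eqref{teo-sd-1*} with $\max\{C_{1_*},C_{3_*}\}$ and $\max\{C_{2_*},C_{4_*}\}$ and then split by $\mF$- versus $\mG$-contribution. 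Both splittings are valid and give $h$-independent constants, so the argument goes through either way.
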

\begin{proof}
	From \eqref{teo-sd-1*}, along with the estimates \eqref{F_sd-estimate-1} and \ref{G_sd-estimate-1} we have that
	\begin{equation*}
		\label{teo-sd-basic-error1-1}
		\begin{aligned}
			\Vert \eu&\Vert_{L_{\mathcal{J}}^1(\mV)}\leq \Vert \xi_{u} - \eta_u\Vert_{L_{\mathcal{J}}^1(\mV)}\leq \Vert \xi_{u}\Vert_{L_{\mathcal{J}}^1(\mV)}+\Vert \eta_u\Vert_{L_{\mathcal{J}}^1(\mV)}\\
			&\leq C_{1_*}\Vert \mF\Vert_{L_{\mathcal{J}}^1(\mV')}+ C_{2_*}\Vert \mG\Vert_{L_{\mathcal{J}}^1(\mQ')} + \Vert \eta_u\Vert_{L_{\mathcal{J}}^1(\mV)}\\
			&\leq C_{1_*}\bigg(1+T\max\bigg\{C_{k_1},C_{k_2}\bigg\}\bigg)\bigg[\Vert a\Vert\,\Vert \eta_u\Vert_{L_{\mathcal{J}}^1(\mV)}+ \Vert b\Vert\,\Vert \eta_p\Vert_{L_{\mathcal{J}}^1(\mQ)} \bigg] \\
			&\hspace{3cm} + C_{2_*}\Vert b\Vert(1+C_{k_3})\Vert \eta_u\Vert_{L_{\mathcal{J}}^1(\mV)}+ \Vert \eta_u\Vert_{L_{\mathcal{J}}^1(\mV)}\\
			&\leq \bigg[C_{1_*}\bigg(1+T\max\bigg\{C_{k_1},C_{k_2}\bigg\}\bigg)\Vert a\Vert + C_{2_*}\Vert b\Vert(1+C_{k_3})+1\bigg]\Vert \eta_u\Vert_{L_{\mathcal{J}}^1(\mV)}\\
			&\hspace{3cm} + C_{1_*}\bigg(1+T\max\bigg\{C_{k_1},C_{k_2}\bigg\}\bigg)\Vert b\Vert\,\Vert \eta_p\Vert_{L_{\mathcal{J}}^1(\mQ)}\\
			&= C_{1u} \Vert \eta_u\Vert_{L_{\mathcal{J}}^1(\mV)}  +C_{1p}\Vert \eta_p\Vert_{L_{\mathcal{J}}^1(\mQ)},
		\end{aligned}
	\end{equation*}
	where
	\begin{equation*}
		\begin{aligned}
			&C_{1u}:=C_{1_*}\bigg(1+T\max\bigg\{C_{k_1},C_{k_2}\bigg\}\Vert a\Vert + C_{2_*}\Vert b\Vert(1+C_{k_3})\bigg)+1,\\ &C_{1p}:=C_{1_*}\bigg(1+T\max\bigg\{C_{k_1},C_{k_2}\bigg\}\bigg)\Vert b\Vert.	
	\end{aligned}\end{equation*}
	Similarly, we have
	\begin{equation*}
		\label{teo-sd-basic-error1-2}
		\begin{aligned}
			\Vert \ep&\Vert_{L_{\mathcal{J}}^1(\mQ)}\leq \Vert \xi_p-\eta_p\Vert_{L_{\mathcal{J}}^1(\mQ)}\leq \Vert \xi_p\Vert_{L_{\mathcal{J}}^1(\mQ)}+ \Vert \eta_p\Vert_{L_{\mathcal{J}}^1(\mQ)}\\
			&\leq C_{3_*}\Vert \mF\Vert_{L_{\mathcal{J}}^1(\mV')} +C_{4_*}\Vert \mG\Vert_{L_{\mathcal{J}}^1(\mQ')}+\Vert \eta_p\Vert_{L_{\mathcal{J}}^1(\mQ)}\\
			&\leq C_{3_*}\bigg(1+T\max\bigg\{C_{k_1},C_{k_2}\bigg\}\bigg)\bigg[\Vert a\Vert\,\Vert \eta_u\Vert_{L_{\mathcal{J}}^1(\mV)}+ \Vert b\Vert\,\Vert \eta_p\Vert_{L_{\mathcal{J}}^1(\mQ)} \bigg] \\
			&\hspace{3cm} + C_{4_*}\bigg(\Vert b\Vert(1+C_{k_3})\Vert \eta_u\Vert_{L_{\mathcal{J}}^1(\mV)}\bigg)+ \Vert \eta_p\Vert_{L_{\mathcal{J}}^1(\mV)}\\
			&\leq \big[C_{3_*}\bigg(1+T\max\bigg\{C_{k_1},C_{k_2}\bigg\}\bigg)\Vert a\Vert + C_{4_*}\Vert b\Vert(1+C_{k_3})\big]\Vert \eta_u\Vert_{L_{\mathcal{J}}^1(\mV)}\\
			&\hspace{2cm} + \bigg[C_{3_*}\bigg(1+T\max\bigg\{C_{k_1},C_{k_2}\bigg\}\bigg)\Vert b\Vert+1\bigg]\Vert \eta_p\Vert_{L_{\mathcal{J}}^1(\mQ)}\\
			&= C_{1u} \Vert \eta_u\Vert_{L_{\mathcal{J}}^1(\mV)}  +C_{1p}\Vert \eta_p\Vert_{L_{\mathcal{J}}^1(\mQ)},
		\end{aligned}
	\end{equation*}
	where
	\begin{equation*}
		\begin{aligned}
			&C_{2u}:=C_{3_*}\bigg(1+T\max\bigg\{C_{k_1},C_{k_2}\bigg\}\Vert a\Vert + C_{4_*}\Vert b\Vert(1+C_{k_3})\bigg),\\ &C_{2p}:=C_{3_*}\bigg(1+T\max\bigg\{C_{k_1},C_{k_2}\bigg\}\bigg)\Vert b\Vert+1.	
		\end{aligned}
	\end{equation*}
	The proof is complete by adding the estimates and  taking the infimum on $\mV_h$ and $\mQ_h$.\qed
\end{proof}%Instead, we give the values of the constant so they can be verified by the interested reader. 

\subsection{Error estimates in a weaker norm}

%Now we use a \textit{duality} argument in order to obtain error estimates in a weaker norm. The nature of dual mixed formulation in which our study is based will consider a type of abstract dual backward problem, provided for non mixed formulation in \cite{shaw2001optimal}. In the finite element theory, this approach is commonly known   as the Aubin-Nitsche technique (see \cite[Chapter 4]{gatica2014simple} for instance). 
%Indeed, the idea of using this approach lies in the fact that we can obtain estimates in a weaker space norm, as we will see in Chapters 3 and 4, where we give $L_{\mathcal{J}}^1(L^2(\Omega))$ error estimates.

Let us consider two Hilbert spaces $\mV_{-}$ and $\mQ_{-}$ to be less regular spaces than $\mV$ and $\mQ$, respectively, with dense inclusions
\begin{equation}
	\label{dual-inclusion1}
	\mV \xhookrightarrow[]{}\mV_{-}\quad\text{and}\quad \mQ\xhookrightarrow[]{}\mQ_{-}.
\end{equation}

Our goal is to estimate $\Vert u-u_h\Vert_{L_{\mathcal{J}}^1(\mV_{-})}$ and $\Vert p - p_h\Vert_{L_{\mathcal{J}}^1(\mQ_{-})}$. To this end, let us denote,
\begin{equation*}
	\mV_{+}'=(\mV_{-})',\quad \mQ_{+}'=(\mQ_{-})',
\end{equation*}
where the subindex  $``+"$ suggests that we have more regular spaces. For instance, the inclusions provided in \eqref{dual-inclusion1} allow to obtain
\begin{equation*}
	\label{dual-inclusion2}
	\mV_{+}'\xhookrightarrow[]{}\mV',\quad \mQ_{+}'\xhookrightarrow[]{}\mQ'.
\end{equation*} 

For each $w\in \mV_{++}$, $w_h\in \mV_h$, $m\in\mQ_{++}$ and $m_h\in\mQ_h$, we define the errors  $\ew:=w-w_h$ and $\emm:=m-m_h$, where $\mV_{++}$ and $\mQ_{++}$ can be understood as more regular spaces than $\mV$ and $\mQ$, respectively, satisfying
\begin{equation*}
	\label{dual-inclusion3}
	\mV_{++}\xhookrightarrow[]{}\mV,\quad\mQ_{++}\xhookrightarrow[]{}\mQ.
\end{equation*}

%From now and on we will assume the existence of spaces $\mV_{++}$ and $\mQ_{++}$, more regular than $\mV$ and $\mQ$, such that the solutions $u$ and $p$ belong to $\mV\cap\mV_{++}$ and $\mQ\cap\mQ_{++}$, respectively.

%\subsection{Dual estimates for some non-perturbed cases}

For the forthcoming analysis, we  introduce the following hypothesis, concerning with a dual-backward mixed formulation of Problem \ref{prob4}.

\begin{hypo}
	\label{dual-hypo1-prob4}
	For any $\tau\in\mathcal{J}$ and for any $(f_+,g_+)\in L_{[0,\tau]}^{\infty}(\mV_{+}'\times\mQ_{+}')$, we assume that the solution $(w,m)$ of 
	\begin{equation}
		\label{dual-problem1}
		\left\{\begin{aligned}
			a(v,w(t))+b(v,m(t))&=\langle f_+,v\rangle_{\mV_{+}'\times \mV} \\
			&\hspace{0.5cm}+ \int_{t}^{\tau}\bigg[k_1(s,t)a(v,w(s))+k_2(t,s)b(v,m(s)) \bigg]ds,\\
			b(w(t),q)&=\langle g_+,q\rangle_{\mQ_{+}'\times \mQ}+\int_{t}^\tau k_3(t,s)b(w(s),q)\,ds,
		\end{aligned}\right.
	\end{equation}
	for all $(v,q)\in \mV\times\mQ$, belongs to $L_{[0,\tau]}^{\infty}(\mV_{++}\times\mQ_{++})$ a.e. in $[0,\tau]$. Moreover, assume that there exists a constant $\widehat{C}$, independent of $f_+$ and $g_+$, such that
	\begin{equation*}
		\label{dual-non-perturbed-bound1}
		\Vert w\Vert_{L_{[0,\tau]}^{\infty}(\mV_{++})}+\Vert m\Vert_{L_{[0,\tau]}^{\infty}(\mV_{++})}\leq \widehat{C}\bigg(\Vert f_+\Vert_{L_{[0,\tau]}^{\infty}(\mV_{+}')}+\Vert g_+\Vert_{L_{[0,\tau]}^{\infty}(\mQ_{+}')}\bigg).
	\end{equation*}
\end{hypo}
%
%Let $\xi= \tau - t$ and $\eta=\tau - s$ and define
%$$
%\overline{w}:=w(\tau-\cdot),\quad \overline{m}:=m(\tau-\cdot),\quad \overline{f}(\xi):=f(\tau- \xi),\quad
%\overline{g}(\xi):=g(\tau- \xi),\quad  k(\xi,\eta):=k(\tau-\eta,\eta - \xi).
%$$
%
%Then \eqref{dual-problem1} can be equivalently rewritten in forward form as: Find $(w,m)\in L_{[0,\tau]}^{\infty}(\mV_{++}\times\mQ_{++})$ such that for a.e. $\xi\in[0,\tau]$,
%\begin{equation}
%\label{dual-problem2}
%\left\{\begin{aligned}
%a(v,\overline{w}(t))+b(\overline{m}(t),v)&=\langle \overline{f}_+,v\rangle_{\mV_{+}'\times \mV} + \int_{0}^{\xi}k(\xi,\eta)\big[a(v,\overline{w}(\eta))+b(\overline{m}(\eta),v) \big]d\eta,\qquad&\forall v\in \mV\\
%b(q,\overline{w}(t))&=\langle \overline{g}_+,q\rangle_{\mQ_{+}'\times \mQ},\qquad&\forall q\in \mQ.
%\end{aligned}\right.
%\end{equation}
%so that from \eqref{dual-non-perturbed-bound1} we can infer the bound
%\begin{equation}
%\label{dual-non-perturbed-bound2}
%\Vert \overline{w}\Vert_{L_{[0,\tau]}^{\infty}(\mV_{++})}+\Vert \overline{m}\Vert_{L^\infty(\mathcal{J};\mV_{++})}\leq \widehat{C}(\Vert \overline{f}_+\Vert_{L^\infty(\mathcal{J};\mV_{+})}+\Vert \overline{g}_+\Vert_{L^\infty(\mathcal{J};\mV_{+})}).
%\end{equation}
Hypothesis 1 gives, in practice, an additional regularity property  for the solution  $(w,m)\in L_{[0,\tau]}^{\infty}(\mV_{++}\times\mQ_{++})$ when the data is such that  $f\in L_{[0,\tau]}^{\infty}(\mV_{+}')$ and $g\in L_{[0,\tau]}^{\infty}(\mQ_{+}')$ (See \cite{shaw2001optimal} and the references therein). 
%Also, note that we have used $a(v,w)$, similar to the usual duality argument, meaning that, in some sense, that \eqref{dual-problem1} is the adjoint formulation of Problem \ref{prob4}.
%
%In the next result, we give the techniques and arguments that will be used throughout this section to obtain additional error estimates.

Now we will prove the main result of this section.
\begin{teo}
	\label{dual-teo1-prob4}
	Under the hypothesis of Theorem \ref{teo-sd-basic-error1}, assume that Hypothesis \ref{dual-hypo1-prob4} holds. Then, there exists a constant $C$, independent of $h$, such that
	\begin{equation*}
		\label{estimate1-dual-prob4}
		\begin{aligned}
			&\Vert u - u_h\Vert_{L_{\mathcal{J}}^1(\mV_{-})} + \Vert p - p_h\Vert_{L_{\mathcal{J}}^1(\mQ_{-})}\\
			&\hspace{2cm}\leq C\bigg(\inf_{v\in \mV_h}\Vert u - v\Vert_{L_{\mathcal{J}}^1(\mV)} + \inf_{q\in \mQ_h}\Vert p - q\Vert_{L_{\mathcal{J}}^1(\mQ)}\bigg)\bigg(r(h)+ n(h)\bigg)
		\end{aligned}
	\end{equation*}
	where
	$$
	\begin{aligned}
		&r(h):=\sup_{w\in L_{[0,\tau]}^{\infty}(\mV_{++})}\inf_{w_h\in L_{[0,\tau]}^{\infty}(\mV_h)}\frac{\Vert w- w_h\Vert_{L_{[0,\tau]}^{\infty}(\mV)}}{\Vert w \Vert_{L_{[0,\tau]}^{\infty}(\mV_{++})}},\\
		&n(h):=\sup_{m\in L_{[0,\tau]}^{\infty}(\mQ_{++})}\inf_{m_h\in L_{[0,\tau]}^{\infty}(\mQ_h)}\frac{\Vert m- m_h\Vert_{L_{[0,\tau]}^{\infty}(\mQ)}}{\Vert m \Vert_{L_{[0,\tau]}^{\infty}(\mQ_{++})}}.
	\end{aligned}
	$$
	Moreover, if $r(h)+n(h)\leq Ch$, then
	\begin{equation*}
		\begin{aligned}
			\Vert u - u_h\Vert_{L_\mathcal{J}^1(\mV_{-})} + &\Vert p - p_h\Vert_{L_\mathcal{J}^1(\mQ_{-})}\\
			&\leq Ch\left(\inf_{v\in \mV_h}\Vert u - v\Vert_{L_\mathcal{J}^1(\mV)} + \inf_{q\in \mQ_h}\Vert p - q\Vert_{L_\mathcal{J}^1(\mQ)}\right).
		\end{aligned}
	\end{equation*}
\end{teo}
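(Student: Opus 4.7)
The plan is to use a duality (Aubin--Nitsche) argument that exploits the dual backward problem \eqref{dual-problem1} together with Galerkin orthogonality from the primal error equation \eqref{semi-discrete-1}. Since $L^{\infty}_{[0,\tau]}(\mathcal{V}_+')$ is (by construction) the dual of $L^{1}_{[0,\tau]}(\mathcal{V}_-)$, and similarly for $\mathcal{Q}$, it suffices to estimate the expression
\begin{equation*}
\mathcal{I}(f_+,g_+) \;:=\; \int_0^{\tau}\!\Big[\langle f_+,\texttt{e}_u\rangle_{\mathcal{V}_+'\times\mathcal{V}_-}+\langle g_+,\texttt{e}_p\rangle_{\mathcal{Q}_+'\times\mathcal{Q}_-}\Big]\,dt
\end{equation*}
uniformly for $(f_+,g_+)$ of unit norm in $L^{\infty}_{[0,\tau]}(\mathcal{V}_+'\times\mathcal{Q}_+')$, and then take the supremum.

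First, for any such $(f_+,g_+)$ I invoke Hypothesis~\ref{dual-hypo1-prob4} to obtain the solution $(w,m)\in L^{\infty}_{[0,\tau]}(\mathcal{V}_{++}\times\mathcal{Q}_{++})$. Testing the two dual equations in \eqref{dual-problem1} with $v=\texttt{e}_u(t)$ and $q=\texttt{e}_p(t)$, adding them and integrating in $t\in[0,\tau]$ produces $\mathcal{I}(f_+,g_+)$ on the right-hand side together with certain ``backward'' kernel terms and the symmetric form $a(\texttt{e}_u,w)+b(\texttt{e}_u,m)+b(w,\texttt{e}_p)$ on the left. On the other hand, by Galerkin orthogonality \eqref{semi-discrete-1}, testing the primal error equation with arbitrary $(w_h(t),m_h(t))\in\mathcal{V}_h\times\mathcal{Q}_h$ and integrating in $t$ furnishes the same bilinear combination (with $w$ and $m$ replaced by $w_h$ and $m_h$) against ``forward'' memory contributions. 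Subtracting the two identities allows me to solve for $\mathcal{I}(f_+,g_+)$ as a sum of terms each featuring the approximation residuals $w-w_h$ or $m-m_h$, plus extra memory terms combining $\Lambda_i$ and $\tilde\Lambda_i$.

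The key technical step--and the main obstacle--is reorganising the four double time-integrals that arise from the two memory structures. A Fubini exchange shows, for instance, that
\begin{equation*}
\int_0^\tau\!\!\!\int_t^{\tau}\!k_1(s,t)\,a(\texttt{e}_u(t),w(s))\,ds\,dt-\int_0^\tau\!\!\!\int_0^{t}\!k_1(t,s)\,a(\texttt{e}_u(s),w_h(t))\,ds\,dt
\end{equation*}
collapses, after renaming variables, to a single double integral involving $w-w_h$; analogous identities hold for the $k_2$ and $k_3$ kernels, so every remaining term is controlled by $\|\texttt{e}_u\|_{L^1_{\mathcal{J}}(\mathcal{V})}$ or $\|\texttt{e}_p\|_{L^1_{\mathcal{J}}(\mathcal{Q})}$ times $\|w-w_h\|_{L^{\infty}_{[0,\tau]}(\mathcal{V})}$ or $\|m-m_h\|_{L^{\infty}_{[0,\tau]}(\mathcal{Q})}$, with constants depending on $T$, $\|a\|$, $\|b\|$ and the $C_{k_i}$. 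One has to keep careful track of swap of integration order and of which time argument belongs to $w$ versus $e_u$; I expect this bookkeeping to be the most delicate part.

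Once that identity is in hand, I take the infimum over $(w_h,m_h)\in L^{\infty}_{[0,\tau]}(\mathcal{V}_h\times\mathcal{Q}_h)$, which produces the factor
\begin{equation*}
r(h)\,\|w\|_{L^{\infty}_{[0,\tau]}(\mathcal{V}_{++})}+n(h)\,\|m\|_{L^{\infty}_{[0,\tau]}(\mathcal{Q}_{++})},
\end{equation*}
and apply the regularity bound of Hypothesis~\ref{dual-hypo1-prob4} to replace these higher norms by $\widehat{C}\bigl(\|f_+\|_{L^{\infty}_{[0,\tau]}(\mathcal{V}_+')}+\|g_+\|_{L^{\infty}_{[0,\tau]}(\mathcal{Q}_+')}\bigr)$. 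Taking the supremum over unit-norm $(f_+,g_+)$ yields
\begin{equation*}
\|\texttt{e}_u\|_{L^1_{\mathcal{J}}(\mathcal{V}_-)}+\|\texttt{e}_p\|_{L^1_{\mathcal{J}}(\mathcal{Q}_-)}\leq C\bigl(r(h)+n(h)\bigr)\bigl(\|\texttt{e}_u\|_{L^1_{\mathcal{J}}(\mathcal{V})}+\|\texttt{e}_p\|_{L^1_{\mathcal{J}}(\mathcal{Q})}\bigr),
\end{equation*}
and a final application of Theorem~\ref{teo-sd-basic-error1} to the stronger-norm factor gives the infimum form claimed in the statement. The last conclusion $r(h)+n(h)\leq Ch$ is then an immediate substitution.
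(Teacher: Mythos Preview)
Your overall strategy---pair the dual backward problem with Galerkin orthogonality, use Fubini to realign the memory integrals, and then apply Hypothesis~\ref{dual-hypo1-prob4} and Theorem~\ref{teo-sd-basic-error1}---is the right one, and it is exactly what the paper does for the $k_1$ term. The gap lies in the sentence ``analogous identities hold for the $k_2$ and $k_3$ kernels.'' They do not.

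Look at what happens after you test the dual with $(\texttt{e}_u,\texttt{e}_p)$ and the primal error equation \eqref{semi-discrete-1} with $(w_h,m_h)$, integrate, and subtract. The non-memory parts combine cleanly into $a(\texttt{e}_u,w-w_h)+b(\texttt{e}_u,m-m_h)+b(w-w_h,\texttt{e}_p)$, and so does the $k_1$ block after the Fubini swap you describe. But the remaining memory blocks pair up differently: from the dual you get $k_2\,b(\texttt{e}_u(s),m(t))$ and $k_3\,b(w(t),\texttt{e}_p(s))$, whereas from the primal you get $k_3\,b(\texttt{e}_u(s),m_h(t))$ and $k_2\,b(w_h(t),\texttt{e}_p(s))$. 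Subtracting leaves residual terms of the form
\[
\int_0^\tau\!\!\int_0^t\bigl(k_2(t,s)-k_3(t,s)\bigr)\,b(\texttt{e}_u(s),m_h(t))\,ds\,dt
\quad\text{and}\quad
\int_0^\tau\!\!\int_0^t\bigl(k_2(t,s)-k_3(t,s)\bigr)\,b(w_h(t),\texttt{e}_p(s))\,ds\,dt,
\]
which involve $m_h$ and $w_h$ themselves, not the differences $m-m_h$, $w-w_h$. Writing $m_h=m-(m-m_h)$ only trades this for a term bounded by $\|\texttt{e}_u\|_{L^1_{\mathcal J}(\mathcal V)}\|m\|_{L^\infty_{[0,\tau]}(\mathcal Q)}$, which carries no factor of $r(h)+n(h)$ and so yields no improvement over the trivial embedding $\mathcal V\hookrightarrow\mathcal V_-$.

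The paper's remedy is not to take generic unit-norm data and then sup, but to \emph{choose} the dual data $f_+,g_+$ with built-in correction factors
\[
z_1(t)=1+\|\texttt{e}_u(t)\|_{\mathcal V_-}^{-1}\!\int_0^t\!\bigl(k_3-k_2\bigr)b(\texttt{e}_u(s),m_h)\,ds,\qquad
z_2(t)=1+\|\texttt{e}_p(t)\|_{\mathcal Q_-}^{-1}\!\int_0^t\!\bigl(k_2-k_3\bigr)b(w_h,\texttt{e}_p(s))\,ds,
\]
so that these $(k_2-k_3)$ residues are absorbed into $\langle f_+,\texttt{e}_u\rangle$ and $\langle g_+,\texttt{e}_p\rangle$ and cancel exactly against the primal orthogonality relations. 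With this choice every surviving memory term genuinely contains $w-w_h$ or $m-m_h$, and the rest of your outline (infimum in $w_h,m_h$, regularity from Hypothesis~\ref{dual-hypo1-prob4}, then Theorem~\ref{teo-sd-basic-error1}) goes through as you wrote it.
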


\begin{proof}
	Taking time dependent test functions $v\in L^1(0,\tau;\mV)$ in the first equation of \eqref{dual-problem1}, and  integrating in $[0,\tau]$, we obtain
	\begin{equation}
		\label{dual-bound1}
		\begin{aligned}
			\int_{0}^\tau\langle f_+,v(t)\rangle_{\mV_{+}'\times \mV} dt&=\int_0^\tau\bigg\{a(v(t),w(t))+b(v(t),m(t)) \\
			&- \int_{0}^{t}\bigg[k_1(t,s)a(v(s),w(t))+k_2(t,s)b(v(s),m(t)) \bigg]\,ds\bigg\}dt,
		\end{aligned}
	\end{equation}
	by interchanging the order of integration in the Volterra integral. Also,  taking $q\in L^1(0,\tau;\mQ)$ in the second equation of \eqref{dual-problem1} we have
	\begin{equation}
		\label{dual-bound2}
		\begin{aligned}
			\int_{0}^\tau\langle g_+,q(t)\rangle_{\mQ_{+}'\times \mQ} dt&=\int_0^\tau\bigg\{b(w(t),q(t))-\int_{0}^t k_3(t,s) b(w(t),q(s))ds\bigg\}dt
		\end{aligned}
	\end{equation}
	Set  $v=u-u_h$ in \eqref{dual-bound1} and $q=p-p_h$ in \eqref{dual-bound2}. Then,
	\begin{align}
		&\int_{0}^\tau\langle f_+,\eu(t)\rangle_{\mV_{+}'\times \mV} dt=\int_0^\tau\bigg\{a(\eu(t),w(t))+b(\eu(t),m(t))\nonumber \\
		&\hspace{2.5cm}- \int_{0}^{t}\bigg[k_1(t,s)a(\eu(s),w(t))+k_2(t,s)b(\eu(s),m(t)) \bigg]\,ds\bigg\}dt,\label{dual-bound3}\\
		&\int_{0}^{\tau}\langle g_+,\ep(t)\rangle_{\mQ_{+}'\times \mQ}dt=\int_0^\tau\bigg\{b(w(t),\ep(t))-\int_{0}^t k_3(t,s) b(w(t),\ep(s))ds\bigg\}dt.\label{dual-bound4}
	\end{align}
	Adding and subtracting $w_h$ and $m_h$ in \eqref{dual-bound3} and \eqref{dual-bound4} yield to
	\begin{align}
		\nonumber&\int_{0}^\tau\langle f_+,\eu(t)\rangle_{\mV_{+}'\times \mV} dt=\int_0^\tau\bigg\{a(\eu(t),\ew(t))\!+\!b(\eu(t),\emm(t)) \!+\! a(\eu(t),w_h(t))\\
		&\hspace{1.5cm}\nonumber+b(\eu(t),m_h(t)) - \int_{0}^{t}\bigg[k_1(t,s)\bigg(a(\eu(s),\ew(t))+a(\eu(s),w_h(t))\bigg)\\
		&\hspace{1.5cm}+k_2(t,s)\bigg(b(\eu(s),\emm(t)) + b(\eu(s),m_h(t)) \bigg)\bigg]ds\bigg\}dt,\label{dual-bound5}
	\end{align}
	and
	\begin{align}
		\nonumber\int_{0}^{\tau}&\langle g_+,\ep(t)\rangle_{\mQ_{+}'\times \mQ}dt=\int_0^\tau\bigg\{b(\ew(t),\ep(t)) + b(w_h(t),\ep(t))\\
		&\hspace{1cm}-\int_{0}^t k_3(t,s)\bigg[b(\ew(t),\ep(s))+b(w_h(t),\ep(s))\bigg]\, ds\bigg\}dt.\label{dual-bound6}
	\end{align}
	Setting $f_+(t)=z_1(t)\eu(t)\Vert \eu(t)\Vert_{\mV_{-}}^{-1}$, with
	$$
	z_1(t)=1+\int_{0}^t\bigg[k_3(t,s)-k_2(t,s)\bigg]b(\eu(s),m_h)ds\Vert \eu(t)\Vert_{\mV_{-}}^{-1},
	$$
	gives that $\Vert f_+\Vert_{\mV_{-}}=\vert z_1(t)\vert$. Thus
	$$\langle f_+,\eu\rangle_{\mV_{+}'\times \mV_{-}}=\Vert \eu\Vert_{\mV_{-}} + \int_{0}^t\bigg[k_3(t,s)-k_2(t,s)\bigg]b(\eu(s),m_h)\,ds.$$
	Replacing the previous result in \eqref{dual-bound5}, yields to
	\begin{align}
		\nonumber\int_{0}^\tau&\Vert \eu(t)\Vert_{\mV_{-}}dt=\int_0^\tau\bigg\{a(\eu(t),\ew(t))+b(\eu(t),\emm(t)) + a(\eu(t),w_h(t)) \\
		&\nonumber\hspace{0.3cm}+b(\eu(t),m_h(t))- \int_{0}^{t}\bigg[k_1(t,s)a(\eu(s),\ew(t))+k_1(t,s)a(\eu(s),w_h(t))\\
		&\hspace{0.3cm}+k_2(t,s)b(\eu(s),\emm(t)) + k_3(t,s)b(\eu(s),m_h(t))\bigg]ds \bigg\}dt,\label{dual-teo1-001}
	\end{align}
	On the other hand, setting  $g_+(t)=z_2(t)\ep(t)\Vert \ep(t)\Vert_{\mQ_{-}}^{-1}$, with
	$$
	z_2(t):=1+\int_0^t\bigg[k_2(t,s)-k_3(t,s)\bigg]b\left(w_h(t),\ep(s)\right)ds\Vert \ep(t)\Vert_{\mQ_{-}}^{-1},
	$$
	and observing that $\Vert g_+(t)\Vert_{\mQ_{-}}=\vert z(t)\vert$, we have from \eqref{dual-bound6} that
	\begin{align}
		\nonumber\int_{0}^{\tau}\Vert \ep(t)\Vert_{\mQ_{-}}dt&=\int_0^\tau\bigg\{b(\ew(t),\ep(t)) + b(w_h(t),\ep(t))\\
		&-\int_{0}^t\bigg[ k_3(t,s)b(\ew(t),\ep(s))+k_2(t,s)b(w_h(t),\ep(s))\bigg]\,ds\bigg\}dt.\label{dual-teo1-002}
	\end{align}
	Also, from \eqref{semi-discrete-1} we know that
	\begin{equation}
		\label{semi-discrete-1*}
		\left\{
		\begin{aligned}
			a(\eu,w_h) + b(w_h,\ep) &= \int_{0}^{t}\bigg[k_1(t,s)a(\eu(s),w_h) + k_2(t,s)b(w_h,\ep(s))\bigg]ds,\\
			b(\eu,m_h) &=\int_{0}^tk_3(t,s)b(\eu(s),m_h) ds
		\end{aligned}
		\right.
	\end{equation}
	for all $(w_h,m_h) \in \mV_h\times\mQ_h$. Therefore, by adding \eqref{dual-teo1-001} and \eqref{dual-teo1-002} and combining this with \eqref{semi-discrete-1*}, we obtain
	\begin{equation*}
		\label{dual-teo-004}
		\begin{aligned}
			&\int_{0}^\tau\bigg(\Vert \eu\Vert_{\mV_{-}}+\Vert \ep\Vert_{\mQ_{-}}\bigg)\,dt=\int_0^\tau\bigg\{a(\eu,\ew)+b(\eu,\emm) +b(\ew,\ep)\\
			&- \int_{0}^{t}\bigg[k_1(t,s)a(\eu(s),\ew)+k_2(t,s)b(\eu(s),\emm)+k_3(t,s)b(\ew,\ep(s)) \bigg]\,ds\bigg\}dt\\
			%			&\leq\int_{0}^\tau \bigg\{\Vert a \Vert\,\Vert \eu\Vert_{\mV}\Vert \ew\Vert_{\mV}+\Vert b\Vert \,\Vert \emm\Vert_{\mQ}\Vert\eu\Vert_{\mV} +  \Vert b\Vert \,\Vert \ew\Vert_{\mV}\Vert\ep\Vert_{\mQ}\\
			%			&+ \int_{0}^t\bigg[C_{k_1}\Vert a \Vert\,\Vert \eu(s)\Vert_{\mV}\Vert \ew\Vert_{\mV}+C_{k_2}\Vert b\Vert \,\Vert \emm\Vert_{\mQ}\Vert\eu(s)\Vert_{\mV}+C_{k_3}\Vert b\Vert \,\Vert \ew\Vert_{\mV}\Vert\ep(s)\Vert_{\mQ} \bigg]ds\bigg\}dt\\
			%	&\leq \int_{0}^\tau\bigg\{ \Vert a\Vert\,\Vert \ew\Vert_{\mV}\bigg[ \Vert \eu\Vert_{\mV} +C_k\int_{0}^t\Vert \eu(s)\Vert_{\mV}ds\bigg]+ \Vert b\Vert\, \Vert\emm\Vert_{\mQ}\bigg[\Vert \eu\Vert_{\mV} + \int_{0}^t\Vert \eu(s)\Vert_{\mV}ds \bigg]\\
			%	&\hspace{10cm}+\Vert b\Vert \,\Vert \ew\Vert_{\mV}\Vert\ep\Vert_{\mQ} \bigg\}dt\\
			%&\leq \int_{0}^\tau\bigg\{(\Vert a\Vert + \Vert b\Vert)(\Vert \ew\Vert_{\mV}+\Vert \emm\Vert_{\mQ})\bigg[\Vert \eu\Vert_{\mV} +C_k\int_{0}^t\Vert \eu(s)\Vert_{\mV}ds\bigg]+\Vert b\Vert \,\Vert \ew\Vert_{\mV}\Vert\ep\Vert_{\mQ}\bigg\}dt\\
			&\leq\max\bigg\{\Vert a\Vert,\Vert b\Vert\bigg\}\int_{0}^\tau\bigg\{\Vert \eu(t)\Vert_{\mV}+\Vert \ep\Vert_{\mQ}\\
			&\hspace{0.1cm}+\max\bigg\{C_{k_1},C_{k_2},C_{k_3}\bigg\}\int_{0}^t\bigg[\Vert \eu(s)\Vert_{\mV}+\Vert\ep(s)\Vert_{\mQ}\bigg]ds \bigg\}\bigg(\Vert \ew\Vert_{\mV}+\Vert \emm\Vert_{\mQ}\bigg)dt\\
			&\leq I\bigg(\Vert \eu\Vert_{L_{[0,\tau]}^1(\mV)}+\Vert \ep\Vert_{L_{[0,\tau]}^1(\mQ)}\bigg)\bigg(\Vert\ew\Vert_{L_{[0,\tau]}^{\infty}(\mV)}+\Vert\emm\Vert_{L_{[0,\tau]}^{\infty}(\mQ)}\bigg),
		\end{aligned}
	\end{equation*}
	where $I:=\max\{\Vert a\Vert,\Vert b\Vert\}(1+TC_{k})$, $C_k:=\max\{C_{k_1},C_{k_2},C_{k_3} \}$. In the above argument, we have also used the continuity of the bilinear forms, the boundedness of the kernels, H\"older's inequality, and \cite[Lemma 3]{shaw2001optimal}. 
	
	On the other hand, from Hypothesis \ref{dual-hypo1-prob4} and the definition of $r(h)$ and $n(h)$, we have that there exists a positive constant $\widehat{C}$ such that
	$$\Vert\ew\Vert_{L_{[0,\tau]}^{\infty}(\mV)}+\Vert\emm\Vert_{L_{[0,\tau]}^{\infty}(\mQ)}\leq \widehat{C}\,\bigg[r(h)+n(h)\bigg]\bigg(\Vert f\Vert_{L_{[0,\tau]}^{\infty}(\mV_{+}')}+\Vert g\Vert_{L_{[0,\tau]}^{\infty}(\mQ_{+}')}\bigg).$$
	Then, we have that 
	\begin{equation}
		\label{dual01}
		\begin{aligned}
			&\Vert \eu\Vert_{L_{[0,\tau]}^1(\mV_{-})}+\Vert \ep\Vert_{L_{[0,\tau]}^1(\mQ_{-})}\\
			&\hspace{0.5cm}\leq \widetilde{C}\,\bigg(\Vert f\Vert_{L_{[0,\tau]}^{\infty}(\mV_{+}')}+\Vert g\Vert_{L_{[0,\tau]}^{\infty}(\mQ_{+}')}\bigg)\bigg(\Vert \eu\Vert_{L_{[0,\tau]}^1(\mV)}+\Vert \ep\Vert_{L_{[0,\tau]}^1(\mQ)}\bigg),
		\end{aligned}
	\end{equation}
	where $\widetilde{C}=I\widehat{C}\big[r(h)+n(h)\big]
	$. Then, taking $f_+$ and $g_+$ such that $\Vert f\Vert_{\mV_{+}'}=\Vert g_+\Vert_{\mQ_{+}'}=1$ gives a $L^1(0,\tau)$ estimate. Since $\tau$ is arbitrary, we can choose $\tau=T$ and apply Theorem \ref{teo-sd-basic-error1} in the left hand side of \eqref{dual01} to obtain the desired result.\qed
\end{proof}

\section{Applications to viscoelastic problems}
\label{sec:applications_timoshenko_beam}

In this section we will apply the theory developed above in two well known viscoelastic problems, considering an additional unknown, we obtain a mixed viscoelastic formulation that also fits in our study, as we will show in the following. Firstly, we will  apply our theory to the Laplace operator with memory terms, which has been previously trated in a similar way by \cite{sinha2009mixed}. Next, we will consider a mixed formulation for a Viscoelastic Timoshekno beam (\cite{payette2010nonlinear}). This is important from the  engineering point of view, since the thickness of the structure causes locking in numerical methods,  and we show that our viscoelastic formulation in mixed form avoids the phenomenon. 

We will start with some further notations.

Let $\Omega\subset\mathbb{R}^{n}$, with $n\in\{1,2\}$, be an open and convex domain with boundary $\partial\Omega$. We denote by $L^2(\Omega)$ and $H^1(\Omega)$  the usual Lebesgue and Sovolev spaces, endowed with the standard norm $\Vert \cdot \Vert_{L^2(\Omega)}$ and $\Vert \cdot \Vert_{H^1(\Omega)}$, respectively. We also consider $H_0^1(\Omega)$ the subspace of $H^1(\Omega)$ consisting of all functions which vanish at the boundary of $\Omega$ and $H(\text{div},\Omega)$ the space of vectorial functions with component in $L^2(\Omega)$ and with {\it divergence} also in $L^2(\Omega)$.  We introduce the spaces $\mathbb{Q}:=\{ (v\,,\eta\,)\in L^2(\Omega)\times L^2(\Omega) \}$ and $\mathbb{H}:=\{ (v\,,\eta\,)\in H^1(\Omega)\times H^1(\Omega) \}.$

Let $T\in[0,\infty)$ be such that we define the period of observation $\mathcal{J}:=[0,T]$. 

In the next examples, we will consider an adequate approximation of the continuous spaces by using finite element spaces defined on a family of triangulations of the domain $\Omega$ which mesh size $h$. We denote by DOF the number of degrees of freedom which correspond to the dimensions of the corresponding discrete spaces. To study the approximation error we will consider de corresponding norm defined by:
$$
\texttt{e}_0(f):=\Vert f-f_h\Vert_{L_{\mathcal{J}}^1(L^2(\Omega))}, \qquad \texttt{e}_1(f):=\Vert f- f_h\Vert_{L_{\mathcal{J}}^1(H^1(\Omega))}.
$$
for every function $f$ and its finite element approximations denoted by $f_h$. 
This allows to define an experimental rate of convergence  $\texttt{r}_i(\cdot)$ as
$$
\texttt{r}_i(\cdot):=\frac{\log(\texttt{e}_i(\cdot)/\texttt{e}_i'(\cdot))}{\log(h/h')},\;\; i=0,1,
$$
where $\texttt{e}_i$ and $\texttt{e}_i'$ denotes two consecutive relative errors and $h$ and $h'$ their corresponding mesh sizes.

\subsection{The Laplace problem with memory terms} 
\subsubsection{Mixed formulation}

Let $\Omega\subset\mathbb{R}^{n}$, with $n\in\{2,3\}$, be an open and convex domain with Lipschitz boundary $\partial\Omega$. The Laplace problem of our interest is the following: find $u\in L^1(\mathcal{J};H_0^1(\Omega))$ such that
$$
\left\{\begin{aligned}
	-\Delta u &=f+ \int_{0}^tk(t,s)\Delta u(s)&\text{ in } \Omega,\\
	u&=0&\text{ in }\partial\Omega.
\end{aligned}\right.
$$
Now, the space $
H(\dive,\Omega)=\{\sigma\in L^2(\Omega)^d\;:\; \dive\sigma\in L^2(\Omega) \}
$ is equipped with the norm $\Vert \sigma\Vert_{H(\dive,\Omega)}:=(\Vert \sigma\Vert_{L^2(\Omega)}^2+\Vert\dive\sigma\Vert_{L^2(\Omega)}^2)^{1/2}$.

Introducing the additional unknown $\sigma=\nabla u$, we obtain the following mixed formulation:
\begin{problem}
	\label{prob:laplace}
	Find $(\sigma,u)\in L^1(\mathcal{J};H(\dive;\Omega)\times L^2(\Omega))$ such that
	$$\left\{\begin{aligned}
		(\sigma,\tau) + (u,\dive\tau)&=0&\forall \tau\in H(\dive,\Omega),\\
		(\dive\sigma,v)&=-(f,v)+\int_{0}^tk(t,s)(\dive\sigma(s),v)ds&\forall v\in L^2(\Omega).
	\end{aligned}\right.$$
\end{problem}
Note that this mixed formulation is similar to that studied in \cite{sinha2009mixed}, but with the presence of the history of $\sigma$.This results in the abstract mixed formulation: Find $(\sigma,u)\in L^1(\mathcal{J},H(\dive,\Omega)\times L^2(\Omega))$ such that
$$
\left\{\begin{aligned}
	a(\sigma,\tau)+b(\tau,u)&=0&\forall \tau\in H(\dive,\Omega)\\
	b(\sigma,v)&=-(f,v)_{0,\Omega}+\int_{0}^tk(t,s)b(\sigma(s),v)ds&\forall v\in L^2(\Omega).
\end{aligned}\right.
$$
In this case, we have that $a(\cdot,\cdot)$ is the usual inner product in $L^2(\Omega)^2$ and $b(\tau,v)=(\tau,\dive v)$, which,  as is well known, satisfy the ellipticity in the kernel and inf-sup condition, respectively. Indeed, we note that, from the definition of $b(\cdot,\cdot)$, the continuous kernel is given by
$$
\mK:=\bigg\{\tau\in H(\dive,\Omega)\;:\;\dive\tau=0 \text{ in } \Omega \bigg\}.
$$
It follows that $a(\tau,\tau)=\Vert\tau\Vert_{L^2(\Omega)}^2=\Vert\tau\Vert_{H(\dive,\Omega)}^2$ for all $\tau\in\mK$. Hence we can apply directilly our theory (cf. Theorem \ref{teo5}) to conclude that this model has a unique solution $(\sigma,u)\in L^1(\mathcal{J};H(\dive;\Omega)\times L^2(\Omega))$ that satisfies
\begin{equation}
	\label{laplace1}
	\Vert\sigma\Vert_{L_\mathcal{J}^1(H(\dive,\Omega))}+\Vert u\Vert_{L_{\mathcal{J}}^1(L^2(\Omega))}\leq C\Vert f\Vert_{L^2(\Omega)},
\end{equation}
where $C>0$ is a constant that depends on the stability constants, the observation time and the history kernel. These types of formulations usually occur in fluid problems in porous media. $\sigma$ represents in velocity field and $u$ the concentration of solutes.

\subsubsection{Mixed Finite element approximations}
Given a quasi-uniform triangulation of $\Omega$, we denote by $V_h\times W_h$ the pair of finite element spaces of $H(\dive,\Omega)$ and $L^2(\Omega)$ such that
\begin{enumerate}
	\item $\dive V_h\subset W_h$,
	\item there exists a linear operator $\Pi_h:V\rightarrow V_h$ such that satisfies the commutative diagram property $\dive(\Pi_h(\cdot))=P_h(\nabla(\cdot))$, where $P_h:W\rightarrow W_h$ represents the $L^2(\Omega)$-projector.
\end{enumerate}
With this definition, the corresponding discrete formulation of Problem \ref{prob:laplace} reads as follows.
\begin{problem}
	\label{prob:laplace-discreto}
	Find $(\sigma_h,u_h)\in L^1(\mathcal{J};W_h\times V_h)$ such that
	$$\left\{\begin{aligned}
		(\sigma_h,\tau) + (u_h,\dive\tau)&=0&\forall \tau\in W_h,\\
		(\dive\sigma_h,v)&=-(f,v)+\int_{0}^tk(t,s)(\dive\sigma_h(s),v)ds&\forall v\in V_h.
	\end{aligned}\right.$$
\end{problem}
Moreover, we assume that the following approximation properties
\begin{equation}
	\label{laplace-estimates}
	\Vert \sigma - \Pi_h\sigma\Vert_{L^2(\Omega)}\leq Ch\Vert\sigma\Vert_{H^{1}(\Omega)}\quad\text{and}\quad
	\Vert u - P_h u\Vert_{L^2(\Omega)}\leq C h\Vert u \Vert_{H^1(\Omega)},
\end{equation}
are satisfied. This type of approximation property is found when Raviart-Thomas or BDM along with discontinuous piecewise constants elements are used. Further description of these estimates can be found in \cite{boffi2013mixed}.

Similar to the continuous case, we have the corresponding abstract semi-discrete formulation:
Find $(\sigma_h,u_h)\in L^1(\mathcal{J},W_h\times V_h)$ such that
$$
\left\{\begin{aligned}
	a(\sigma_h,\tau)+b(\tau,u_h)&=0&\forall \tau\in W_h\\
	b(\sigma_h,v)&=-(f,v)_{0,\Omega}+\int_{0}^tk(t,s)b(\sigma_h(s),v)ds&\forall v\in V_h.
\end{aligned}\right.
$$
The discrete kernel is given by
$$
\mK_h:=\bigg\{\tau_h\in W_h\;:\;\dive\tau_h=0 \text{ in } \Omega \bigg\},
$$
thus the bilinear form $a(\cdot,\cdot)$ is $\mK_h-$elliptic. For Raviart-Thomas elements, the discrete inf-sup condition from the Raviart-Thomas interpolation operator bound \eqref{laplace-estimates} and Fortin's lemma. Hence, the existence and uniqueness of a solution pair $(\sigma_h,u_h)\in L^1(\mathcal{J};W_h\times V_h)$ is guaranteed thanks to Theorem \ref{teo5}. Moreover, from Theorem \ref{teo-sd-basic-error1} we have that there exist $C>0$ such that
$$
\begin{aligned}
	\Vert \sigma-\sigma_h&\Vert_{L_\mathcal{J}^1(H(\dive,\Omega))}+\Vert u-u_h\Vert_{L_{\mathcal{J}}^1(L^2(\Omega))}\\
	&\leq C\big(\inf_{\tau\in W_h}\Vert \sigma-\tau\Vert_{L_\mathcal{J}^1(H(\dive,\Omega))}+\inf_{v\in V_h}\Vert u -v\Vert_{L_{\mathcal{J}}^1(L^2(\Omega))} \big)
\end{aligned}
$$
From the above, we can deduce the convergence rate of the semi-discrete finite element formulation, provided that the continuous solutions are smooth enough.
\begin{prop}
	\label{prop:laplace-convergence1}
	Let $(\sigma,u)\in L_{\mathcal{J}}^1(H(\dive,\Omega)\times L^2(\Omega))$ and $(\sigma_h,u_h)\in L_{\mathcal{J}}^1(W_h\times V_h)$ be the unique continuous and semi-discrete solutions to Problems \ref{prob:laplace} and \ref{prob:laplace-discreto}, respectively. Assume that $(\sigma,u)\in L^1(\mathcal{J};H^1(\Omega)\times H^1(\Omega))$ and $f\in L_{\mathcal{J}}^1(L^2(\Omega))$, then
	\begin{equation}
		\label{laplace-aproximation}
		\begin{aligned}
			&\|(\sigma,u)-(\sigma_h,u_h)\Vert_{L_{\mathcal{J}}^1(H(\dive,\Omega)\times L^2(\Omega))}\leq Ch \Vert f\Vert_{L_{\mathcal{J}}^1(L^2(\Omega))},
		\end{aligned}
	\end{equation}
	where  $C$ is a positive constant independent of $h$. 
\end{prop}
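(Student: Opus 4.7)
The plan is to specialize the abstract semidiscrete Céa-type estimate of Theorem~\ref{teo-sd-basic-error1} to the Raviart--Thomas / piecewise-constant pair $W_h\times V_h$ and then convert the best-approximation error into an $O(h)$ bound controlled by $\Vert f\Vert_{L^1_\mathcal{J}(L^2(\Omega))}$. Since the discussion preceding the proposition already verified the discrete ellipticity on $\mK_h$ (trivially, because $a(\cdot,\cdot)$ is the $L^2$ inner product and $\dive\tau_h=0$ makes the $H(\dive)$ norm coincide with the $L^2$ norm on $\mK_h$) and the discrete inf-sup condition (via Fortin's lemma and the Raviart--Thomas interpolation), Theorem~\ref{teo-sd-basic-error1} applies and yields
\[
\Vert\sigma-\sigma_h\Vert_{L^1_\mathcal{J}(H(\dive,\Omega))} + \Vert u-u_h\Vert_{L^1_\mathcal{J}(L^2(\Omega))} \le C\!\left(\inf_{\tau\in W_h}\Vert\sigma-\tau\Vert_{L^1_\mathcal{J}(H(\dive,\Omega))} + \inf_{v\in V_h}\Vert u-v\Vert_{L^1_\mathcal{J}(L^2(\Omega))}\right).
\]

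Next, I would choose the canonical interpolants $\tau=\Pi_h\sigma$ and $v=P_h u$ in the two infima. The second infimum is immediately $O(h)$ by the second bound in \eqref{laplace-estimates}. For the first, the $L^2$-part of the $H(\dive)$ norm is handled by the first bound in \eqref{laplace-estimates}, while for the divergence part one exploits the commuting diagram property $\dive(\Pi_h\sigma)=P_h(\dive\sigma)$ to write
\[
\Vert\dive(\sigma-\Pi_h\sigma)\Vert_{L^2(\Omega)} = \Vert\dive\sigma - P_h(\dive\sigma)\Vert_{L^2(\Omega)},
\]
which is again $O(h)$ by the $L^2$-projection estimate. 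Integrating in time gives a clean $O(h)$ bound on the total error in terms of $\Vert\sigma\Vert_{L^1_\mathcal{J}(H^1)}$, $\Vert u\Vert_{L^1_\mathcal{J}(H^1)}$ and a higher-order seminorm of $\dive\sigma$.

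The final step is to close the loop by showing
\[
\Vert\sigma\Vert_{L^1_\mathcal{J}(H^1(\Omega))} + \Vert u\Vert_{L^1_\mathcal{J}(H^1(\Omega))} \le C\Vert f\Vert_{L^1_\mathcal{J}(L^2(\Omega))},
\]
and this is the step I expect to be the main obstacle. It requires classical elliptic regularity for the Laplace operator on the convex (hence $H^2$-regular) domain $\Omega$, combined with a Gronwall-type argument to absorb the Volterra kernel contribution on the right-hand side of Problem~\ref{prob:laplace}, in the same spirit as the proof of Theorem~\ref{teo5}: one bounds the elliptic response of $u$ pointwise in time by $\Vert f\Vert_{L^2}$ plus a convolution of $\Vert\dive\sigma(s)\Vert_{L^2}$, then invokes Gronwall. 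Once this regularity estimate is established, substituting it into the best-approximation bound above delivers \eqref{laplace-aproximation} with a constant $C$ depending on $T$, on the Volterra kernel bound, and on the elliptic regularity constant of $\Omega$, but independent of $h$.
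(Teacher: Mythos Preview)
Your approach matches the paper's: apply the C\'ea-type bound obtained from Theorem~\ref{teo-sd-basic-error1}, insert the canonical interpolants $\Pi_h\sigma$ and $P_hu$, invoke the approximation properties \eqref{laplace-estimates}, and close with a regularity estimate in terms of $\Vert f\Vert_{L^1_{\mathcal J}(L^2)}$. The paper's proof is in fact a single chain of inequalities that leaves both the regularity-plus-Gronwall step and the control of $\Vert\dive(\sigma-\Pi_h\sigma)\Vert_{L^2}$ (which you correctly flag as needing a higher seminorm of $\dive\sigma$) entirely implicit, so your sketch is strictly more detailed than the original.
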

\begin{proof}
	From \eqref{laplace-estimates} and \eqref{laplace-aproximation} we obtain
	\begin{equation*}
		\label{laplace-convergence}
		\begin{aligned}
			\Vert (\sigma,u)-&(\sigma_h,u_h)\Vert_{L_{\mathcal{J}}^1(H(\dive,\Omega)\times L^2(\Omega))} \\
			&\leq C\Vert (\sigma,u)-(\Pi_h\sigma,\mathcal{P}_h u)\Vert_{L_{\mathcal{J}}^1(H(\dive,\Omega)\times L^2(\Omega))}\leq Ch\Vert f\Vert_{L_{\mathcal{J}}^1(L^2(\Omega))},
		\end{aligned}
	\end{equation*}
	\qed
\end{proof}
\subsubsection{Numerical example}
We will consider an  example from the non-fickian flow study in \cite{barbeiro2013laplace}. Here, the computational domain is $\Omega=(0,1)\times(0,1)$, the kernel is $k(t-s)=\frac{1}{\delta}e^{-(t-s)/\delta}$ and we consider  $\delta=0.01$. The function $f$ and boundary conditions are such that, the exact solution is
$$
u(x,y,t)=\cos(t)(x-x^2)(y-y^2).
$$
The observation time is $T=4.5$s and $3000$ time steps are used. In Table \ref{table7:error_sigma_u} we report the computed error and experimental rates of convergence. Using sufficiently small steps to reduce the incidence of the trapezoidal rule in the aproximation of the time integral guarantee a convergence order similar to that of mixed-laplacian models. For instance, on Figure \ref{fig:laplace_u} we observe the comparison of the maximum values reached of each function in the domain along the observation time, coinciding with the exact solution. This can be compared with Figure \ref{fig:laplace_sigma_u}, where we present  snapshots of the field $\sigma$ and $u$ in different time steps.
\begin{table}[!ht]
	\caption{Error data and rate of convergence of the velocity field $\sigma$ and $u$ in the steady non-fickian flow model.}
	\centering\begin{tabular}{CCCCCC}
		\hline
		\text{DOF}&h&\texttt{e}_0(\sigma)&\texttt{r}_0(\sigma)&\texttt{e}_0(u)&\texttt{r}_0(u)\\%\\\cline{3-4}\cline{6-7}\cline{9-10}
		%&&e_1(\beta)&r_1(\beta)&&e_1(\beta)&r_1(\beta)&&e_1(\beta)&r_1(\beta)\\
		%\hline
		%\hline
		%\mbox{Exact}&&0.141648824   & 1414.220151    &  14141974.70 \\
		\hline
		259 &0.2020  &0.0311 &--    &0.0074  &--   \\
		423 &0.1571  &0.0244 &0.97  &0.0057  &0.98   \\
		744 &0.1178  &0.0184 &0.98  &0.0043  &0.99 \\
		1656&0.0785 &0.0123 &0.98  &0.0029  &0.99  \\
		2928&0.0589 &0.0092 &0.98  &0.0021  &0.99\\
		4560&0.0471 &0.0074 &0.98  &0.0017  &0.98  \\
		\hline
	\end{tabular}
	\label{table7:error_sigma_u}
\end{table}
\begin{figure}[!ht]
	\centering
	\includegraphics[scale=0.18]{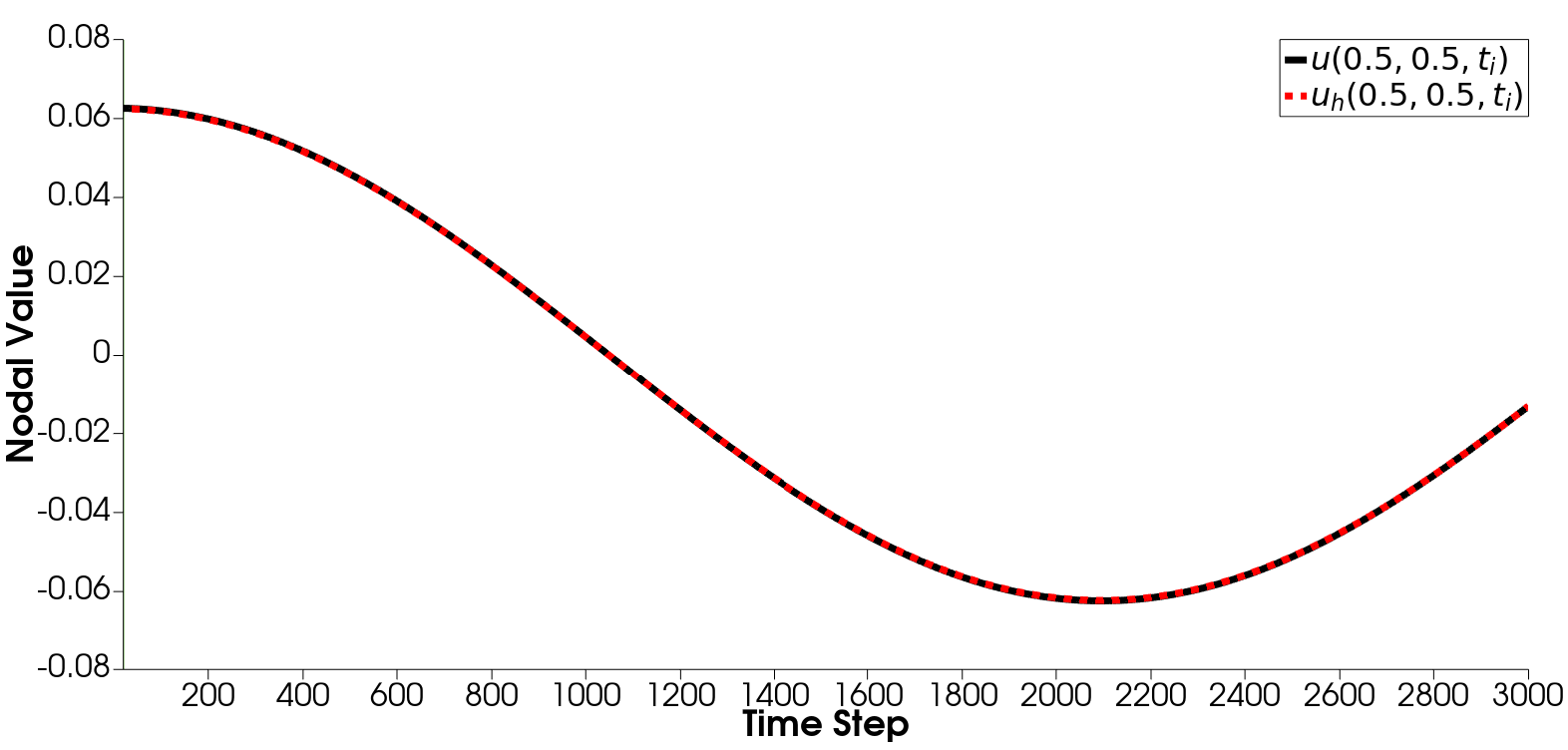}
	\caption{Comparison of the nodal values of $u$ and $u_h$ in $(0.5,0.5)$ in the non-fickian flow example. The observation time is $T=4.5$\,s and $\Delta t=0.0015$.}
	\label{fig:laplace_u}
\end{figure}
\begin{figure}[!ht]
	\begin{minipage}[c]{0.44\linewidth}
		\begin{center}
			\includegraphics[scale=0.12, trim={0 0 0 2.5cm},clip]{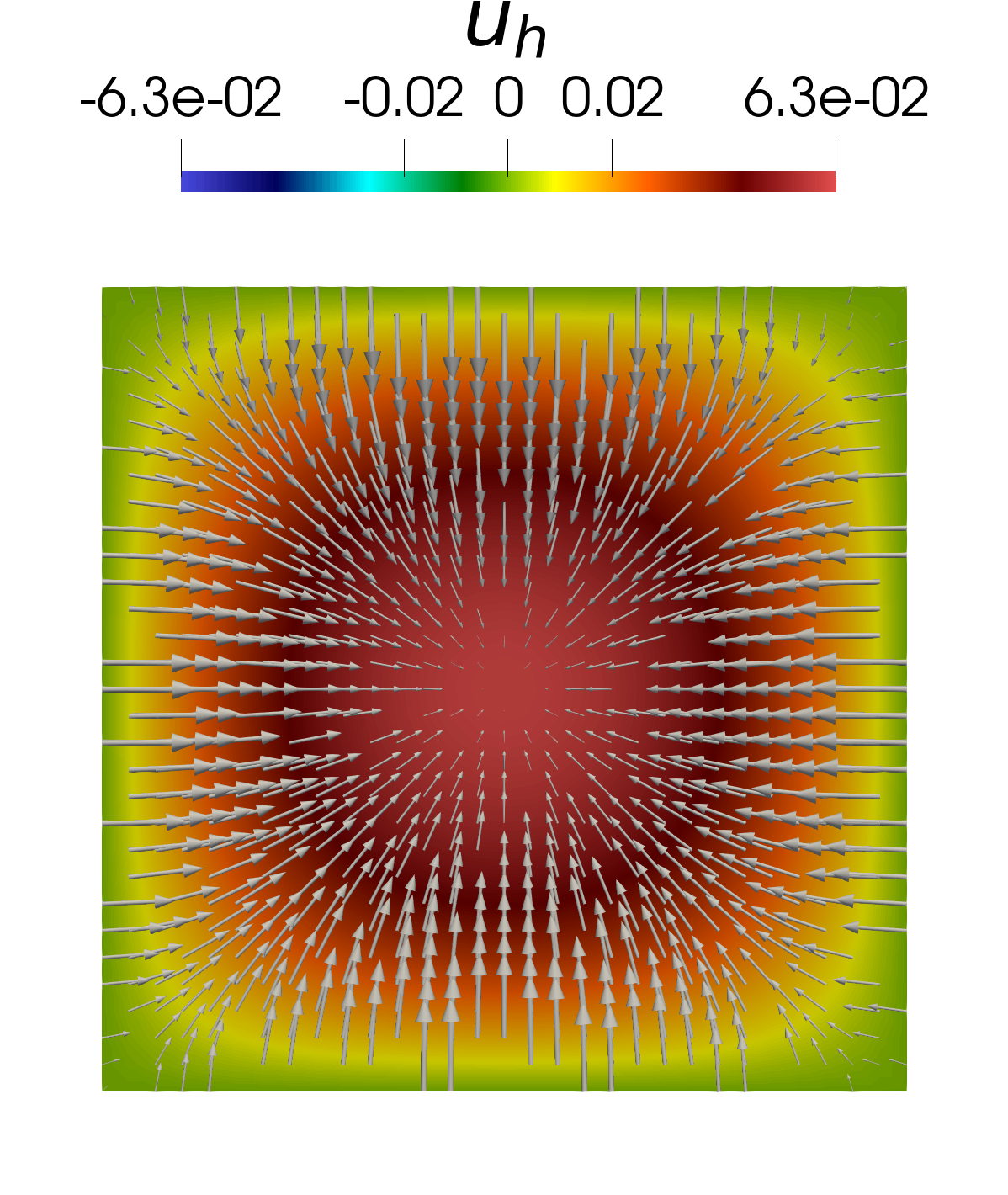}\\
			{$T=0$\,s}
		\end{center}
	\end{minipage}
	\begin{minipage}[c]{0.44\linewidth}
		\begin{center}
			\includegraphics[scale=0.12, trim={0 0 0 2.5cm},clip]{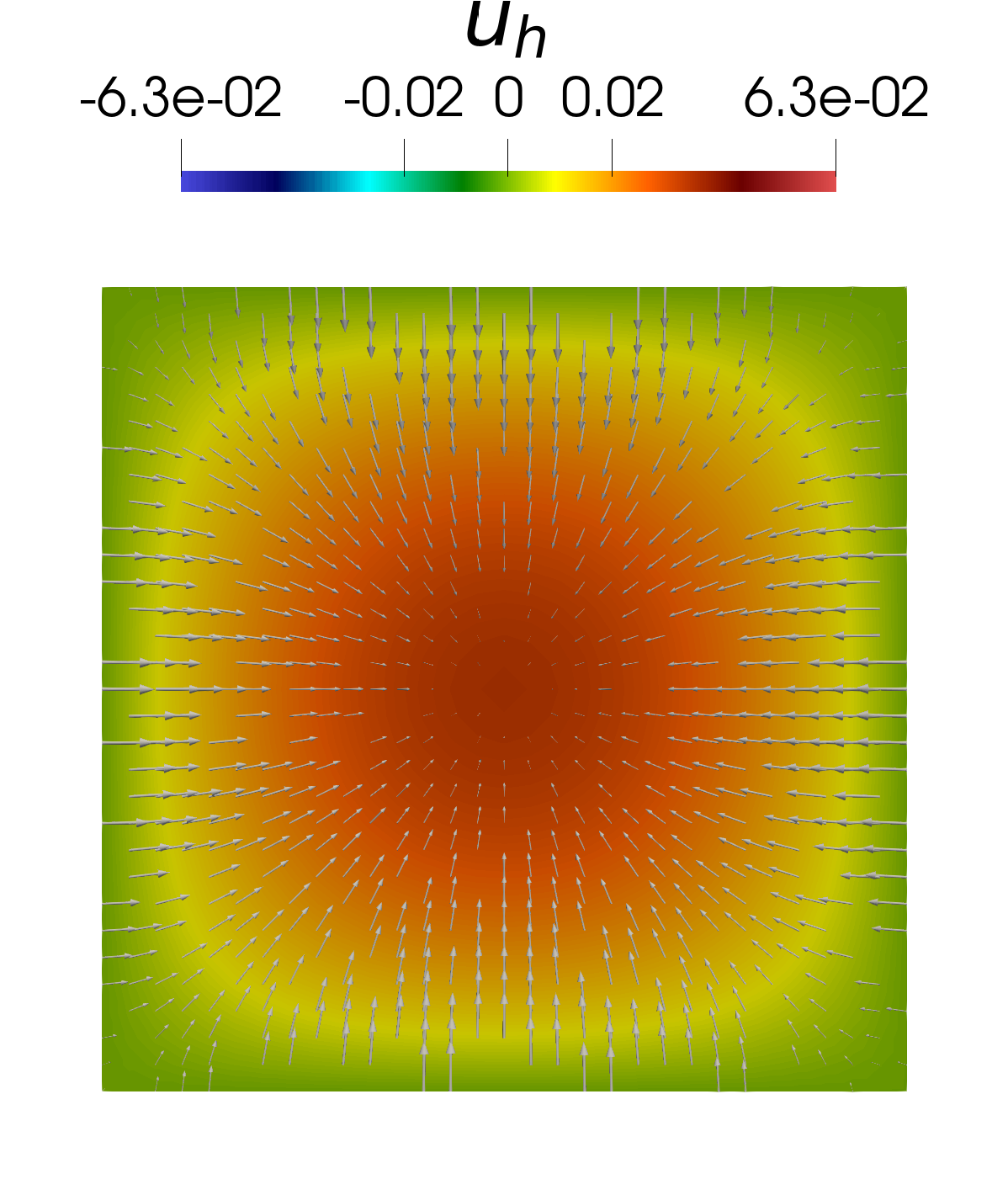}\\
			{$T=1$\,s}
		\end{center}
	\end{minipage}\\
	\begin{minipage}[c]{0.44\linewidth}
		\begin{center}
			\includegraphics[scale=0.12, trim={0 0 0 2.5cm},clip]{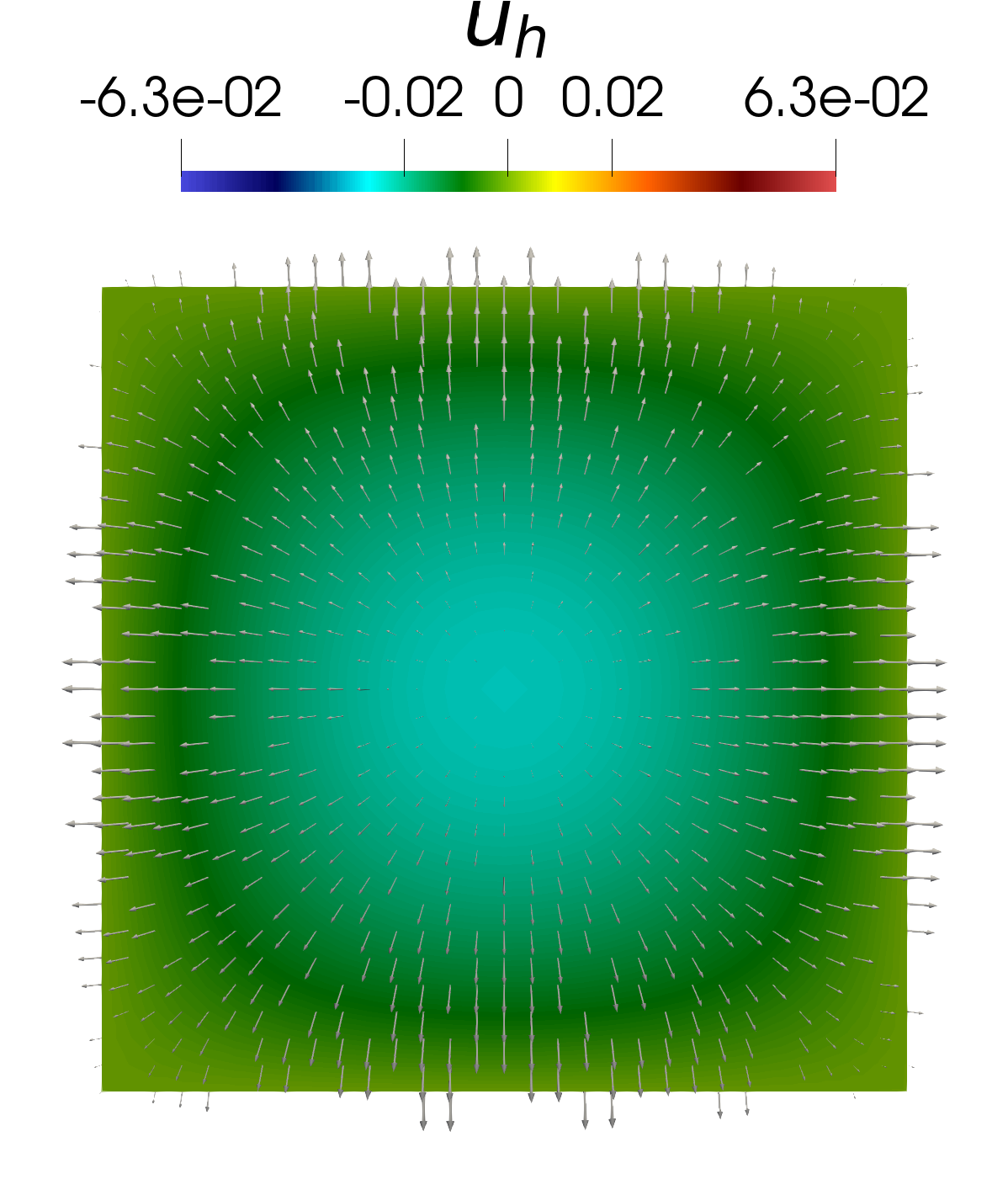}\\
			{$T=2$\,s}
		\end{center}
	\end{minipage}
	\begin{minipage}[c]{0.44\linewidth}
		\begin{center}
			\includegraphics[scale=0.12, trim={0 0 0 2.5cm},clip]{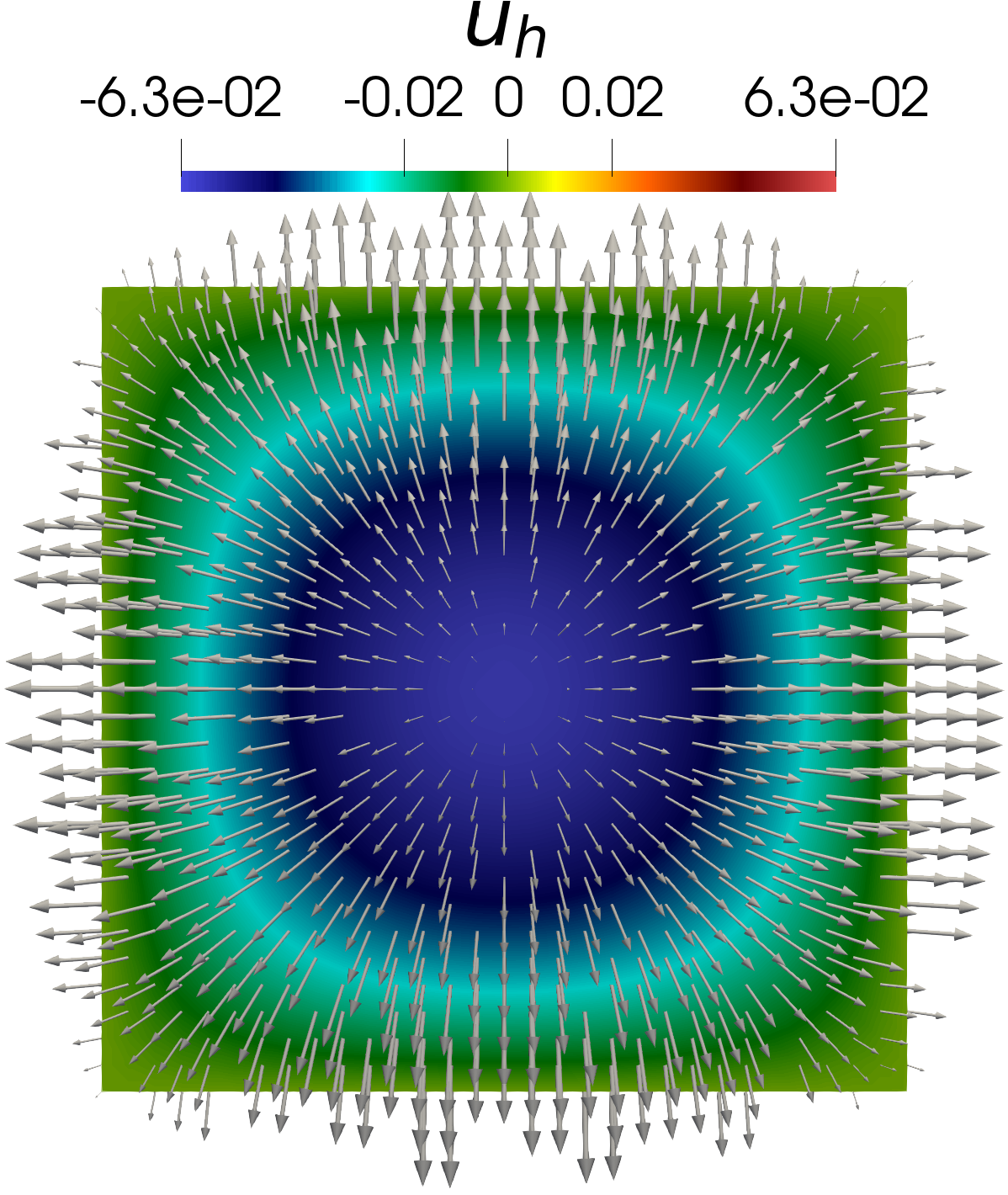}\\
			{$T=3$\,s}
		\end{center}
	\end{minipage}
	%		\begin{minipage}[c]{0.18\linewidth}
	%			\begin{center}
	%				\includegraphics[scale=0.06]{}
	%				\footnotesize{$T=4.5$\,s}
	%			\end{center}
	%		\end{minipage}
	\caption{Evolution of the discrete vector field $\sigma_h$ (arrows) and $u_h$ (surface color). The selected time step is $\Delta t=0.0015$. }
	\label{fig:laplace_sigma_u}
\end{figure}

\subsection{The  viscoelastic Timoshenko beam problem}

\subsubsection{Mixed viscoelastic formulation}

We consider a viscoelastic Timoshenko beam clamped at the ends, which model  is obtained by considering the linear case in \cite{payette2010nonlinear}, i.e, find $(w,\beta)\in L_{\mathcal{J}}^1(H_0^1(\Omega)\times H_0^1(\Omega))$ such that
\begin{equation}
	\label{s-m-b-timoshenko1}
	\begin{aligned}
		&E(0)(I(x) \beta', \eta' )_{0,\Omega} + k_sG(0)\big( A(x)(\beta-w'),\eta-v'\big)_{0,\Omega} =(\widetilde{f},v)_{0,\Omega}  \\
		&+(\widetilde{g},\eta)_{0,\Omega}+\int_0^t\dot{E}(t-s) (I(x)\beta'(s),\eta\,' )_{0,\Omega}\,  ds\\
		&+k_s\int_0^t \dot{G}(t-s)\big(A(x) (\beta(s) -w'(s)),\eta\,-v' \big)_{0,\Omega} \, ds,
	\end{aligned}
\end{equation}
for all $(v\,,\eta\,)\in H_0^1(\Omega)\times H_0^1(\Omega)$, where $\Omega:=[0,L]$, $L$ is the length of the beam, $w$ is the displacement of the beam, $\beta$ represent the rotations,  $k_s$ is the correction factor, $I(x)$ is the moment of inertia of the cross-section, and $A(x)$ is the area of the cross-section. The load $\widetilde{f}$ represents a distributed transverse load, while $\widetilde{g}$  represents a moment load, and $(\widetilde{f},v)_{0,\Omega}:=\int_\Omega \widetilde{f}v\, dx $. The $L^1(\cdot)$ time regularity is assumed in this study because the main goal of this work is to obtain a locking free  numerical method,  whose essence lies in the space regularity. %Additional time regularity of the type $L^p(\cdot)$, $1\leq p\leq \infty$ can be found in \cite{shaw2001optimal}, but the thickness of the structure is no taken into account.\\

Let us introduce the following classic non-dimensional parameter, characteristic of the thickness of the beam
\begin{equation*}
	\varepsilon^2=\frac{1}{L}\int_\Omega \frac{I(x)}{A(x)L^2}dx,
\end{equation*}
which is assumed to be independent of time and is such that  $\varepsilon\in(0,\varepsilon_{\max}]$. Therefore, scaling the loads as $\widetilde{f}(x,t)=\varepsilon^3 f(x,t),$ $\widetilde{g}(x,t)=\varepsilon^3 g(x,t) $, with $f(x,t)$ and $g(x,t)$ independent of $\varepsilon$, dividing \eqref{s-m-b-timoshenko1} by $E(0)$ and defining
$\hat{I}(x):=\frac{I(x)}{\varepsilon^3}$ and $ \hat{A}(x):=k_s\frac{A(x)}{\varepsilon},$
we have that \eqref{s-m-b-timoshenko1} is equivalently to the following: 
\begin{problem}
	\label{prob-timoshenko-2}
	Given $f$, $g\in L_{\mathcal{J}}^1(L^2(\Omega))$, find $(\beta,w)\in L_{\mathcal{J}}^1(H_0^1(\Omega)\times H_0^1(\Omega))$ such that	
	\begin{equation*}
		\label{s-m-b-timoshenko4-qs2}
		\begin{aligned}
			&\big(\hat{I} \beta', \eta' \big)_{0,\Omega} + \frac{\varepsilon^{-2}}{2(1+\nu)}\big( \hat{A}(\beta-w'),\eta-v'\big)_{0,\Omega} =(f_E,v)_{0,\Omega} +(g_E,v)_{0,\Omega} \\
			&\hspace{0.2cm}+\int_0^tk(t,s)\bigg[  (\hat{I}\beta'(s),\eta\,' )_{0,\Omega} +\frac{\varepsilon^{-2}}{2(1+\nu)}\big(\hat{A} (\beta(s) -w'(s)),\eta\,-v' \big)_{0,\Omega} \bigg] ds,
		\end{aligned}
	\end{equation*}
	for all $(v\,,\eta\,)\in H_0^1(\Omega)\times H_0^1(\Omega)$, where $k(t,s)=\dot{E}(t-s)/E(0)$,  $f_E=f/E(0)$ and $g_E=g/E(0)$.
\end{problem}

To apply our  abstract framework, we consider the bending moment formulation studied in \cite{lepe2014locking} for a steady Timoshenko beam, where the following time-dependent variables are introduced:
$$
M:= \hat{I}\beta',\qquad V:=\kappa \varepsilon^{-2}(\beta- w'),
$$
which corresponds to the bending moment and shear stress, respectively. Here $\kappa:=\hat{A}/(2(1+\nu))$. Then, by standard argument used in  \cite{lepe2014locking}, we can rewritte  the problem  in the mixed variational form:
\begin{problem}
	\label{LMR1-MixedFormulation}
	Find $\big((M,V),(\beta,w)\big)\in L^1\big(\mathcal{J};\mathbb{H}\times\mathbb{Q}\big)$ such that
	\begin{equation*}
		\left\{\begin{aligned}
			&( M' - V,\eta)_{0,\Omega}  - ( V',v)_{0,\Omega} = -( f_E, v)_{0,\Omega} -(g_E,\eta)_{0,\Omega}  \\
			&\hspace{2cm}+ \int_{0}^{t}k(t,s)\bigg[( M'(s) - V(s),\eta)_{0,\Omega}  - ( V'(s),v)_{0,\Omega} \bigg]\,ds,\\
			&\big( M/\hat{I}, \tau)_{0,\Omega}  +\varepsilon^2\kappa^{-1}( V,\xi\big)_{0,\Omega}   +\big( \beta,\tau'-\xi\big)_{0,\Omega}  -\big( w, \xi'\big)_{0,\Omega} =0,% \hspace{0.2cm} \forall\in \mathbb{H}.
		\end{aligned}\right.
	\end{equation*}
	for all $\big((\tau,\xi);(\eta,v)\big)\in\mathbb{H}\times\mathbb{Q}$.
\end{problem} 

Note that Problem \ref{LMR1-MixedFormulation} can be writte in the following abstract form: 
Find $((M,V),(\beta,w))\in L^1\big(\mathcal{J};\mathbb{H}\times \mathbb{Q}\big)$ such that
\begin{equation}
	\label{LMR1-MixedFormulation-bilinear_forms}
	\left\{\begin{aligned}
		&a((M,V),(\tau,\xi)) + b((\tau,\xi),(\beta,w)) = 0\\
		&b((M,V),(\eta,v))=F(\eta,v) + \int_{0}^{t}k(t,s)b\big((M(s),V(s)),(\eta,v)\big)ds,
	\end{aligned}\right.
\end{equation}
where
\begin{align*}
	%\label{LMR1-bilinear-forms}
	&a\big((M,V),(\tau,\xi)\big):=\big( M,\tau\big)_{0,\Omega}  +\varepsilon^2\kappa^{-1}( V,\xi)_{0,\Omega} ,\\
	&b\big((\tau,\xi),(\eta,v)\big):=( n, \tau'-\xi)_{0,\Omega}  - ( v,\xi')_{0,\Omega} ,\label{LMR1-bilinear-forms2}
\end{align*}
and $F(\eta,v):=-( f_E,v)_{0,\Omega} -(g_E,\eta)_{0,\Omega} $.

Then, according to \cite[Lemma 2.1 and Lemma 2.2]{lepe2014locking} we have that the bilinear form $a(\cdot,\cdot)$ is elliptic in $\mK:=\{ (\tau,\xi)\in\mathbb{H}\,:\, \tau'-\xi=0,\text{ and }\xi'=0 \text{ in }\Omega\}=\{(\tau,\tau'):\tau\in\mathbb{P}_1(\Omega)\}$ and $b(\cdot,\cdot)$ satisfies an inf-sup condition, with constants independent of $d$. Hence, we invoke Theorem \ref{teo5}, with $k_1(\cdot,\cdot)=k_2(\cdot,\cdot)=0$, $ k_3(t,s)=k(t,s)=\dot{E}(t-s)/E(0)$, $\mV=\mathbb{H}$, and $\mQ=\mathbb{Q}$. Then, we apply it to the system \eqref{LMR1-MixedFormulation-bilinear_forms} to conclude that there exists a constant $C>0$ depending on the stability constants and observation time, but not on the thickness parameter $\varepsilon$, such that
\begin{equation*}
	\label{lmr1-stability}
	\Vert (M,V)\Vert_{L_{\mathcal{J}}^1(\mathbb{H})} + \Vert (\beta,w)\Vert_{L_{\mathcal{J}}^1(\mathbb{Q})}\leq C\bigg(\Vert f_E\Vert_{L_{\mathcal{J}}^1(L^2(\Omega))}+\Vert g_E\Vert_{L_{\mathcal{J}}^1(L^2(\Omega))}\bigg).
\end{equation*}

To obtain the convergence results, we assume that the following additional regularity estimate for Problem \ref{LMR1-MixedFormulation} (see \cite[Proposition 2.1]{lepe2014locking} in the elastic case.)

\begin{equation*}
	\label{LMR1-additional-estimate}
	\begin{aligned}
		\Vert M\Vert_{L_{\mathcal{J}}^1(H^2(\Omega))}+\Vert V\Vert_{L_{\mathcal{J}}^1(H^2(\Omega))} + \Vert &\beta\Vert_{L_{\mathcal{J}}^1(H^1(\Omega))}+\Vert w\Vert_{L_{\mathcal{J}}^1(H^1(\Omega))}\\
		&\leq C\bigg(\Vert f_E\Vert_{L_{\mathcal{J}}^1(H^1(\Omega))}+\Vert g_E\Vert_{L_{\mathcal{J}}^1(H^1(\Omega))}\bigg).
	\end{aligned}
\end{equation*}

\subsubsection{Mixed finite element analysis}

Let $\mathscr{T}_h=\big\{\Omega_i \big\}_{i=1}^{n}$  a partition of $\Omega$ such that $\Omega_i=]x_{i-1},x_i[$, with length $h_i=x_i-x_{i-1}$, $\bigcap_{i=1}^n\Omega_i=\emptyset$ and  $\Omega=\bigcup_{i=1}^n\Omega_i$, $i=1,\dots,n$. Let $h=\max_{1\leq i \leq n}h_i$. We consider the following finite element spaces
\begin{align*}
	\mV_h&:=\bigg\{v\in H^1(\Omega)\;:\; v_{\vert \Omega_i}\in \mathbb{P}_1(\Omega_i),\, \Omega_i\in\mathscr{T}_h \bigg\},\\
	\mQ_h&:=\bigg\{q\in L^2(\Omega)\;:\; v_{\vert \Omega_i}\in \mathbb{P}_{0}(\Omega_i),\, \Omega_i\in\mathscr{T}_h \bigg\},
\end{align*}
and  $\mathbb{H}_h:=\mV_h\times \mV_h$. Then, the semi-discrete counterpart of Problem \ref{prob-timoshenko-2} reads as follows:

\begin{problem}
	\label{prob-timoshenko-1-discreto}
	Given $(f,g)\in L_{\mathcal{J}}^1(L^2(\Omega)\times L^2(\Omega))$, find $(\beta_h,w_h)\in L_{\mathcal{J}}^1(\mathbb{H}_h)$ such that	
	\begin{equation*}
		\label{s-m-b-timoshenko4-qs-discreto}
		\begin{aligned}
			&\big(\hat{I} \beta_h', \eta' \big)_{0,\Omega} + \frac{\varepsilon^{-2}}{2(1+\nu)}\big( \hat{A}(\beta_h-w_h'),\eta-v'\big)_{0,\Omega} =(f_E,v)_{0,\Omega} +(g_E,v)_{0,\Omega} \\
			&\hspace{0.2cm}+\int_0^tk(t,s)\bigg[  (\hat{I}\beta_h'(s),\eta\,' )_{0,\Omega} +\frac{\varepsilon^{-2}}{2(1+\nu)}\big(\hat{A} (\beta_h(s) -w_h'(s)),\eta\,-v' \big)_{0,\Omega} \bigg] ds,
		\end{aligned}
	\end{equation*}
	for all $(v\,,\eta\,)\in\mathbb{H}_h$, where again $k(t,s)=\dot{E}(t-s)/E(0)$, $f_E=f/E(0)$ and $g_E=g/E(0)$.
\end{problem}

Let  $\mathbb{Q}_h:=\mQ_h\times \mQ_h$, then, the semi-discrete counterpart of \eqref{LMR1-MixedFormulation-discreto} reads:
\begin{problem}
	\label{prob7}
	Find $\big((M_h,V_h),(\beta_h,w_h)\big)\in L_{\mathcal{J}}^1(\mathbb{H}_h\times\mathbb{Q}_h)$ such that
	\begin{equation}
		\label{LMR1-MixedFormulation-discreto}
		\left\{\begin{aligned}
			&\big( M_h' - V_h,\eta\big)_{0,\Omega}  - \big( V_h',v\big)_{0,\Omega} = -( f_E, v)_{0,\Omega}  - (g_E,\nu)_{0,\Omega}  \\
			&\hspace{2cm}+ \int_{0}^{t}k(t,s)\bigg[\big( M_h'(s) - V_h(s),\eta\big)_{0,\Omega}  - ( V_h'(s),v)_{0,\Omega} \bigg]ds,\\
			&\big( M_h, \tau\big) _{0,\Omega} +\varepsilon^2\kappa^{-1}\big( V_h,\xi\big)_{0,\Omega}   +\big( \beta_h,\tau'-\xi\big)_{0,\Omega}  -\big( w_h, \xi'\big)_{0,\Omega} =0,
		\end{aligned}\right.
	\end{equation}
	for all $((\tau,\xi),(\eta,v))\in \mathbb{H}_h\times\mathbb{Q}_h$.
\end{problem} 

This formulation is equivalent to the following abstract scheme:\linebreak Find $\big((M_h,V_h),(\beta_h,w_h)\big)\in L_{\mathcal{J}}^1(\mathbb{H}_h\times\mathbb{Q}_h)$ such that
\begin{equation*}
	\label{LMR1-MixedFormulation-bilinear_forms-discreto}
	\left\{\begin{aligned}
		&a\big((M_h,V_h),(\tau,\xi)\big) + b\big((\tau,\xi),(\beta_h,w_h)\big) = 0,\\
		&b\big((M_h,V_h),(\eta,v)\big)=F(\eta,v) + \int_{0}^{t}k(t,s)b\big((M_h(s),V_h(s)),(\eta,v)\big)ds,
	\end{aligned}\right.
\end{equation*}
for all $((\tau,\xi),(\eta,v))\in \mathbb{H}_h\times\mathbb{Q}_h$.

According to \cite{lepe2014locking}, the discrete null space of $b(\cdot,\cdot)$ , namely, 
$$
\mK_h:=\bigg\{(\tau,\xi)\in\mathbb{H}_h\,:\, b\big((\tau,\xi),(\eta_h,v)\big)=0, \forall (\eta_h,v)\in\mathbb{Q}_h \bigg\}, 
$$
coincides with $\mK$. Hence,   we have that $a(\cdot,\cdot)$ is $\mK_h$-elliptic and the discrete inf-sup condition on $b(\cdot,\cdot)$ is  satisfied. Moreover, the corresponding constants of the ellipticity and inf-sup conditions are independent of $h$ and $\varepsilon$. Thus, from Theorem \ref{teo-sd-basic-error1} we have that there exists $C>0$, independent of the thickness paramater $d$, such that
\begin{equation}
	\label{LMR1-discrete-estimate1}
	\begin{aligned}
		\Vert &(M,V)-(M_h,V_h)\Vert_{L_{\mathcal{J}}^1(\mathbb{H})} + \Vert (\beta,w)-(\beta_h,w_h)\Vert_{L_{\mathcal{J}}^1(\mathbb{Q})}\\
		&\leq C\bigg(\inf_{(\tau,\xi)\in \mathbb{H}_h}\Vert (M,V)-(\tau,\xi)\Vert_{L_{\mathcal{J}}^1(\mathbb{H})}+\inf_{(\eta,v)\in \mathbb{Q}_h}\Vert (\beta,w)-(\eta,v)\Vert_{L_{\mathcal{J}}^1(\mathbb{Q})}\bigg).
	\end{aligned}
\end{equation}
Then, by using standard argument on the right hand side of \eqref{LMR1-discrete-estimate1}, we obtain the convergence rate of the mixed finite element formulation.
\begin{prop}
	\label{prop:LMR1-convergence1}
	Let $((M,V),(\beta,w))\in L_{\mathcal{J}}^1(\mathbb{H}\times\mathbb{Q})$ and $\big((M_h,V_h),(\beta_h,w_h)\big)\in L_{\mathcal{J}}^1(\mathbb{H}_h\times\mathbb{Q}_h)$ be the continuous and semi-discrete solutions to Problems \ref{LMR1-MixedFormulation} and \eqref{LMR1-MixedFormulation-discreto}, respectively. If $(f,g)\in L_{\mathcal{J}}^1(H^1(\Omega)\times H^1(\Omega))$, then
	\begin{equation*}
		\begin{aligned}
			&\|(M,V)-(M_h,V_h)\Vert_{L_{\mathcal{J}}^1(\mathbb{H})} + \Vert (\beta,w)-(\beta_h,w_h)\Vert_{L_{\mathcal{J}}^1(\mathbb{Q})}\\
			&\hspace{5cm}\leq Ch \bigg(\Vert f_E\Vert_{L_{\mathcal{J}}^1(H^1(\Omega))}+\Vert g_E\Vert_{L_{\mathcal{J}}^1(H^1(\Omega))}\bigg),
		\end{aligned}
	\end{equation*}
	where  $C$ is a positive constant independent of $h$ and $\varepsilon$. 
\end{prop}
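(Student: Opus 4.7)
The plan is to combine the abstract best-approximation bound \eqref{LMR1-discrete-estimate1}, which already encodes the semi-discrete stability analysis of Theorem \ref{teo-sd-basic-error1} applied to this mixed setting, with concrete polynomial interpolation estimates in the finite element spaces $\mathbb{H}_h$ and $\mathbb{Q}_h$, and finally invoke the additional regularity estimate stated just before to trade the high-order Sobolev norms of the solution for the $H^1$ norms of the data.

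First I would pick explicit candidates in the two infima appearing in \eqref{LMR1-discrete-estimate1}. For the $\mathbb{H}_h$ infimum, take the componentwise Lagrange interpolant $I_h(M,V)\in\mathbb{H}_h$ (recall $\mathbb{H}_h=\mV_h\times\mV_h$ with $\mV_h$ the continuous piecewise-linear space); for the $\mathbb{Q}_h$ infimum, take the $L^2$-orthogonal projection $P_h(\beta,w)\in\mathbb{Q}_h$ onto componentwise piecewise constants. Standard one-dimensional interpolation theory then yields
\begin{equation*}
\|(M,V)-I_h(M,V)\|_{\mathbb{H}}\leq Ch\bigl(\|M\|_{H^2(\Omega)}+\|V\|_{H^2(\Omega)}\bigr),
\end{equation*}
\begin{equation*}
\|(\beta,w)-P_h(\beta,w)\|_{\mathbb{Q}}\leq Ch\bigl(\|\beta\|_{H^1(\Omega)}+\|w\|_{H^1(\Omega)}\bigr),
\end{equation*}
with generic constants that depend only on the shape-regularity of the partition $\mathscr{T}_h$.

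Integrating these estimates in time over $\mathcal{J}$ and plugging into \eqref{LMR1-discrete-estimate1} produces a right-hand side bounded by
\begin{equation*}
Ch\Bigl(\|M\|_{L^1_{\mathcal{J}}(H^2)}+\|V\|_{L^1_{\mathcal{J}}(H^2)}+\|\beta\|_{L^1_{\mathcal{J}}(H^1)}+\|w\|_{L^1_{\mathcal{J}}(H^1)}\Bigr).
\end{equation*}
At this point the additional regularity estimate stated immediately before the proposition bounds exactly this sum by $C\bigl(\|f_E\|_{L^1_{\mathcal{J}}(H^1)}+\|g_E\|_{L^1_{\mathcal{J}}(H^1)}\bigr)$, and the conclusion follows.

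The only delicate point, and the one I would spell out most carefully, is the claim that $C$ is independent of the thickness $\varepsilon$. The interpolation constants are purely geometric and have no $\varepsilon$ in them; the constant from \eqref{LMR1-discrete-estimate1} was already observed to be $\varepsilon$-independent because the discrete $\mathcal{K}_h$-ellipticity of $a(\cdot,\cdot)$ and the discrete inf-sup condition for $b(\cdot,\cdot)$ hold with $\alpha_*^0,\beta_*$ independent of $\varepsilon$ (via \cite{lepe2014locking}), and $C_{1_*},\ldots,C_{4_*}$ in Theorem \ref{teo-sd-basic-error1} are built only out of these constants, $\|a\|$, $\|b\|$, $T$ and the kernel bound $C_{k_3}=C_k$ (note $k_1=k_2=0$ here, so $C_{\widetilde k}=C_k$). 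Finally the additional regularity estimate was stated with an $\varepsilon$-uniform constant, mirroring the elastic case of \cite[Proposition~2.1]{lepe2014locking}, so the chain of inequalities preserves $\varepsilon$-uniformity throughout, yielding the locking-free bound.
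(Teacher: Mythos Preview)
Your argument is correct and is precisely the ``standard argument on the right hand side of \eqref{LMR1-discrete-estimate1}'' that the paper invokes without further detail: pick the Lagrange interpolant in $\mathbb{H}_h$ and the $L^2$-projection in $\mathbb{Q}_h$, apply the one-dimensional approximation estimates, integrate in time, and close with the additional regularity bound. Your discussion of the $\varepsilon$-uniformity is more explicit than what the paper writes; the only small remark is that $\|a\|$ does carry the factor $\varepsilon^2\kappa^{-1}$, but since $\varepsilon\in(0,\varepsilon_{\max}]$ this is uniformly bounded, so the chain of constants remains $\varepsilon$-free as you claim.
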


Now, by using the weak norm estimates studied in the previous section,  we provide additional convergence rate for the bending moment and the shear stress variables of the viscoelastic Timoshenko beam.

\begin{prop}
	\label{prop:LMR1-convergence2}
	Under the hypothesis of Proposition \ref{prop:LMR1-convergence1}, assume that the solution $\big(\widetilde{M},\widetilde{V},\widetilde{\beta},\widetilde{w}\big)$ of the dual-backward problem of Problem \ref{prob7} belongs to $L_{[0,\tau]}^{\infty}(\mathbb{H}\times\mathbb{H})$. Then, there exists $C>0$, independent of $h$ and $\varepsilon$,  such that
	\begin{equation*}
		\Vert (M,V)-(M_h,V_h)\Vert_{L_{\mathcal{J}}^1(\mathbb{Q})} \leq Ch^2\bigg(\Vert f_E\Vert_{L_{\mathcal{J}}^1(H^1(\Omega))}+\Vert g_E\Vert_{L_{\mathcal{J}}^1(H^1(\Omega))}\bigg).
	\end{equation*}
\end{prop}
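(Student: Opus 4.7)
The strategy is a duality (Aubin--Nitsche type) argument realized through Theorem \ref{dual-teo1-prob4} applied to the abstract viscoelastic mixed system \eqref{LMR1-MixedFormulation-bilinear_forms} with the identifications $\mV=\mathbb{H}$, $\mQ=\mathbb{Q}$, $\mV_h=\mathbb{H}_h$, $\mQ_h=\mathbb{Q}_h$, $k_1=k_2\equiv 0$, and $k_3(t,s)=\dot E(t-s)/E(0)$. Since the target estimate is in the $\mathbb{Q}=L^2\times L^2$ norm for the first unknown $(M,V)\in\mV=\mathbb{H}$, the natural weaker space is $\mV_{-}=\mathbb{Q}$, so that $\Vert (M,V)-(M_h,V_h)\Vert_{L^1_\mathcal J(\mV_{-})}$ is exactly the quantity to bound. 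The corresponding weaker space $\mQ_{-}$ for the multiplier may be taken equal to $\mathbb{Q}$; the $(\beta,w)$ contribution in Theorem \ref{dual-teo1-prob4} is then simply absorbed into the constants and does not spoil the argument.

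First I would write down the dual-backward problem \eqref{dual-problem1} specialized to the beam bilinear forms $a,b$ of \eqref{LMR1-MixedFormulation-bilinear_forms} and verify Hypothesis \ref{dual-hypo1-prob4} using the postulated regularity $(\widetilde M,\widetilde V,\widetilde\beta,\widetilde w)\in L^\infty_{[0,\tau]}(\mathbb{H}\times\mathbb{H})$. This identifies the ``++'' spaces: the dual analogue of $(M,V)$ is one derivative smoother than the baseline $\mathbb{H}$ via the extra interior regularity encoded in the dual equations, while the dual analogue of $(\beta,w)$ lies in $\mathbb{H}$ rather than merely in $\mathbb{Q}$. Next I would plug in the standard polynomial interpolation estimates for the beam discretization to bound $r(h)+n(h)\le Ch$: the $\mathbb{P}_1$ interpolant provides $O(h)$ accuracy in the $\mathbb{H}=H^1\times H^1$ norm for functions one order smoother, and the $L^2$ projection onto $\mathbb{P}_0$ provides $O(h)$ accuracy in the $\mathbb{Q}=L^2\times L^2$ norm for $\mathbb{H}$-regular functions. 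Applying Theorem \ref{dual-teo1-prob4} then yields
\begin{equation*}
\Vert (M,V)-(M_h,V_h)\Vert_{L^1_\mathcal J(\mathbb{Q})} \le Ch\Big(\inf_{(\tau,\xi)\in\mathbb{H}_h}\Vert(M,V)-(\tau,\xi)\Vert_{L^1_\mathcal J(\mathbb{H})}+\inf_{(\eta,v)\in\mathbb{Q}_h}\Vert(\beta,w)-(\eta,v)\Vert_{L^1_\mathcal J(\mathbb{Q})}\Big).
\end{equation*}
Finally I would invoke the additional regularity estimate stated just before Section 3.2.2 together with the same $\mathbb{P}_1$/$\mathbb{P}_0$ interpolation bounds to control each infimum by $Ch\bigl(\Vert f_E\Vert_{L^1_\mathcal J(H^1)}+\Vert g_E\Vert_{L^1_\mathcal J(H^1)}\bigr)$, exactly as in the proof of Proposition \ref{prop:LMR1-convergence1}. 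Multiplying the two $h$-factors produces the announced $Ch^2$ bound.

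The main obstacle is keeping all constants independent of the thickness parameter $\varepsilon$, so that the $h^2$ rate is genuinely locking-free. The $\varepsilon^2\kappa^{-1}$ weight entering $a(\cdot,\cdot)$ makes the $\varepsilon$-uniform control of the dual shear component $\widetilde V$ delicate, and one must argue — as in \cite{lepe2014locking} — that the $\mathcal K$-coercivity of $a$ and the inf-sup condition on $b$, both uniform in $\varepsilon$, propagate through Hypothesis \ref{dual-hypo1-prob4} to give a constant $\widehat C$ in the dual stability bound that is independent of $\varepsilon$. Once this $\varepsilon$-uniform dual regularity is established, the rest of the argument is a direct bookkeeping exercise in Theorem \ref{dual-teo1-prob4}.
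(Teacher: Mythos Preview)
Your proposal is correct and follows essentially the same route as the paper: verify Hypothesis~\ref{dual-hypo1-prob4} for the dual-backward problem, apply Theorem~\ref{dual-teo1-prob4} to gain one extra power of $h$, and combine with Proposition~\ref{prop:LMR1-convergence1}. The only ingredient the paper makes explicit that you leave implicit is the time-reversal substitution $\rho=\tau-t$, $\chi=\tau-s$, which turns the dual-backward Volterra system into a forward one with the \emph{same} bilinear forms $a,b$; this lets one invoke the forward well-posedness theory (the paper cites \cite[Lemma~4]{shaw2001optimal}) to justify existence, uniqueness, and the stability bound in Hypothesis~\ref{dual-hypo1-prob4}, with constants inherited from the $\varepsilon$-independent coercivity and inf-sup constants of \cite{lepe2014locking}. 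Your discussion of the ``$++$'' spaces, the $r(h)+n(h)\le Ch$ interpolation step, and the $\varepsilon$-uniformity is in fact more detailed than the paper's own proof, which simply states that Hypothesis~\ref{dual-hypo1-prob4} holds and that the result follows directly from Proposition~\ref{prop:LMR1-convergence1} and Theorem~\ref{dual-teo1-prob4}.
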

\begin{proof}
	We start the proof by noting that the dual-backward form of Problem \ref{prob7} takes the form: \textit{find $(\widetilde{M},\widetilde{V},\widetilde{\beta},\widetilde{w})\in L_{[0,\tau]}^{\infty}(\mathbb{H}\times\mathbb{Q})$ such that a.e. in $[0,\tau]$,}
	\begin{equation}
		\label{LMR1-MixedFormulation-bilinear_forms-dual}
		\left\{\begin{aligned}
			&a((\tau,\xi),(\widetilde{M},\widetilde{V})) + b((\tau,\xi),(\widetilde{\beta},\widetilde{w})) = 0\\
			&b((\widetilde{M},\widetilde{V}),(\eta,v))=F_+(\eta,v) + \int_{\tau}^{t}k(s,t)b\big((\widetilde{M}(s),\widetilde{M}(s)),(\eta,v)\big)ds.
		\end{aligned}\right.
	\end{equation}
	Setting $\rho:=\tau - t$ and $\chi=\tau - s$ and defining $\overline{w}(\cdot):=\widetilde{w}(\tau - \cdot),\;\overline{\beta}(\cdot):=\widetilde{\beta}(\tau - \cdot),\;\overline{M}(\cdot):=\widetilde{M}(\tau - \cdot),\;\overline{V}(\cdot):=\widetilde{V}(\tau - \cdot),\; \overline{f}_{+}(\rho):=f_{+}(\tau -\rho)$ and $\overline{k}(\rho,\chi):=k(\tau-\chi,\tau - \rho),$  we have that the system \eqref{LMR1-MixedFormulation-bilinear_forms-dual} can be rewritten as: \textit{find $\left(\overline{M},\overline{V},\overline{\beta},\overline{w}\right)\in L_{[0,\tau]}^{\infty}(\mathbb{H}\times\mathbb{Q})$ such that for a.e. $\rho\in[0,\tau]$, }
	\begin{equation}
		\label{LMR1-MixedFormulation-bilinear_forms-dual2}
		\left\{\begin{aligned}
			&a\big((\tau,\xi),(\overline{M},\overline{V})\big) + b\big((\tau,\xi),(\overline{\beta},\overline{w})\big) = 0\\
			&b\big((\overline{M},\overline{V}),(\eta,v)\big)=\overline{F}_+(\eta,v) + \int_{0}^{\rho}\overline{k}(\rho,\chi)b\big((\overline{M}(\chi),\overline{M}(\chi)),(\eta,v)\big)d\chi.
		\end{aligned}\right.
	\end{equation}
	Noting that we have the same bilinear forms, we have from \cite[Lemma 4]{shaw2001optimal} that there exist a unique solution $\big(\overline{M},\overline{V},\overline{\beta},\overline{w}\big)\in L_{[0,\tau]}^{\infty}(\mathbb{H}\times\mathbb{Q})$ for \eqref{LMR1-MixedFormulation-bilinear_forms-dual2} (resp. a unique solution  $\big(\widetilde{M},\widetilde{V},\widetilde{\beta},\widetilde{w}\big)\in L_{[0,\tau]}^{\infty}(\mathbb{H}\times\mathbb{Q})$ for \eqref{LMR1-MixedFormulation-bilinear_forms-dual} ). Hence it follows that \eqref{LMR1-MixedFormulation-bilinear_forms-dual} satisfies Hypothesis \ref{dual-hypo1-prob4}, and the result follows directly from Proposition \ref{prop:LMR1-convergence1} and Theorem \ref{dual-teo1-prob4}.
\end{proof}
\subsubsection{Numerical tests}
\label{sec:numerical_examples}
Now  we will report numerical experiments in order to asses the performance of the finite element method for the  viscoelastic  Timoshenko beam by considerg both, quasi-static and dynamical cases, and different Timoshenko beam configurations are considered. 

In this study, the experimental nature of the relaxation modulus are replaced by assumed values of spring constants and viscosity parameters in order to consider the \textit{Standard Linear Solid model (SLS)}, where the relaxation modulus is given by the truncated Prony series:
\begin{equation*}
	\label{relaxation-modulus-SLS}
	E(t)=\frac{k_1k_2}{k_1+k_2}+\bigg(k_1-\frac{k_1k_2}{k_1+k_2} \bigg)e^{-t/\tau}.
\end{equation*}
Here,  $\tau=\eta_2/(k_1+k_2)$. Also, 
we consider a Poisson ration $\nu=0.35$ for all the experiments where it is involved.

\vspace{,5cm}
$\bullet$ {\it Rigidily joined beams}. 
For this experiment, we consider a fully clamped composed beam formed by two rigidly joined beams, which was studied in \cite{lovadina2011locking} and it is shown in Figure \ref{fig:viganouniforme1}.
\begin{figure}[!ht]
	\centering
	\includegraphics[scale=0.8]{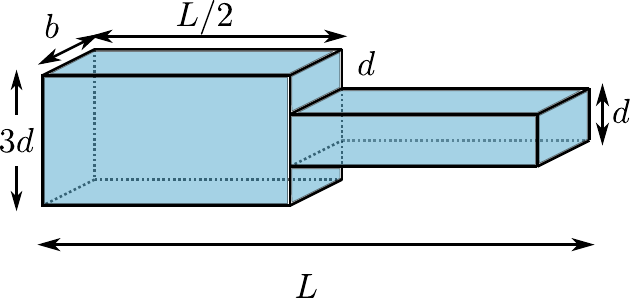}
	\caption{Rigidly joined beams.}
	\label{fig:viganouniforme1}
\end{figure}
The beam length is $L=1\,m$, with $b=0.03\,m$. The area of the cross-section and moment of inertia are given as follows.
$$
A(x)\!=\!\left\{\begin{aligned}
	&9\times 10^{-2}d\;\mbox{m}^2,&\!\!\!\!0\leq x\leq 0.5\\
	&3\times 10^{-2}d\;\mbox{m}^2, &\!\!\!\!0.5< x\leq 1.
\end{aligned}\right.
\hspace{0.3cm}
I(x)\!=\!\left\{\begin{aligned}
	&\frac{27\times10^{-2}d^3}{4}\mbox{m}^4,&\!\!\!0\leq x\leq 0.5\\
	&\frac{10^{-2}d^3}{4}\mbox{m}^4,&\!\!\!0.5< x\leq 1.
\end{aligned}\right.
$$
Thus, the thickness parameter is $\varepsilon^2=5d^2/12L^2$. Following the ideas  from \cite{lovadina2011locking}, we have taken meshes with an even number of elements such that the point $x=L/2$ is always a node. To compute the exact solution, we write an elastic problem following the work of \cite{lepe2014locking} and \cite{celiker2006locking}, then we solve the equilibrium equations by assuming clamped boundary conditions, and finally apply the corresponding principle. The beam is subject to a uniform distributed load  $q(x,t)=e^x\,H(t)\,\text{N/m}$. The selected parameters for the SLS are taken to be $k_1=k_2=1$\,N/m$^2$, $\eta_2=1$\,N$\cdot$s/m$^2$. The observation time is $15\,s$ and we choose a step size $\Delta t=0.003$, corresponding to $5000$ time steps. The selected thickness is set to be $d=0.001\,m$. Tables \ref{table5:error_l2_M_V_d001} and \ref{table5:error_l2_beta_w_d001} shows the corresponding rates when the system is solved using the bending moment formulation. The expected convergence rates in both cases are observed Figure \ref{fig:joined-beams-errors}
\begin{figure}[!ht]
	\centering
	\includegraphics[scale=0.5]{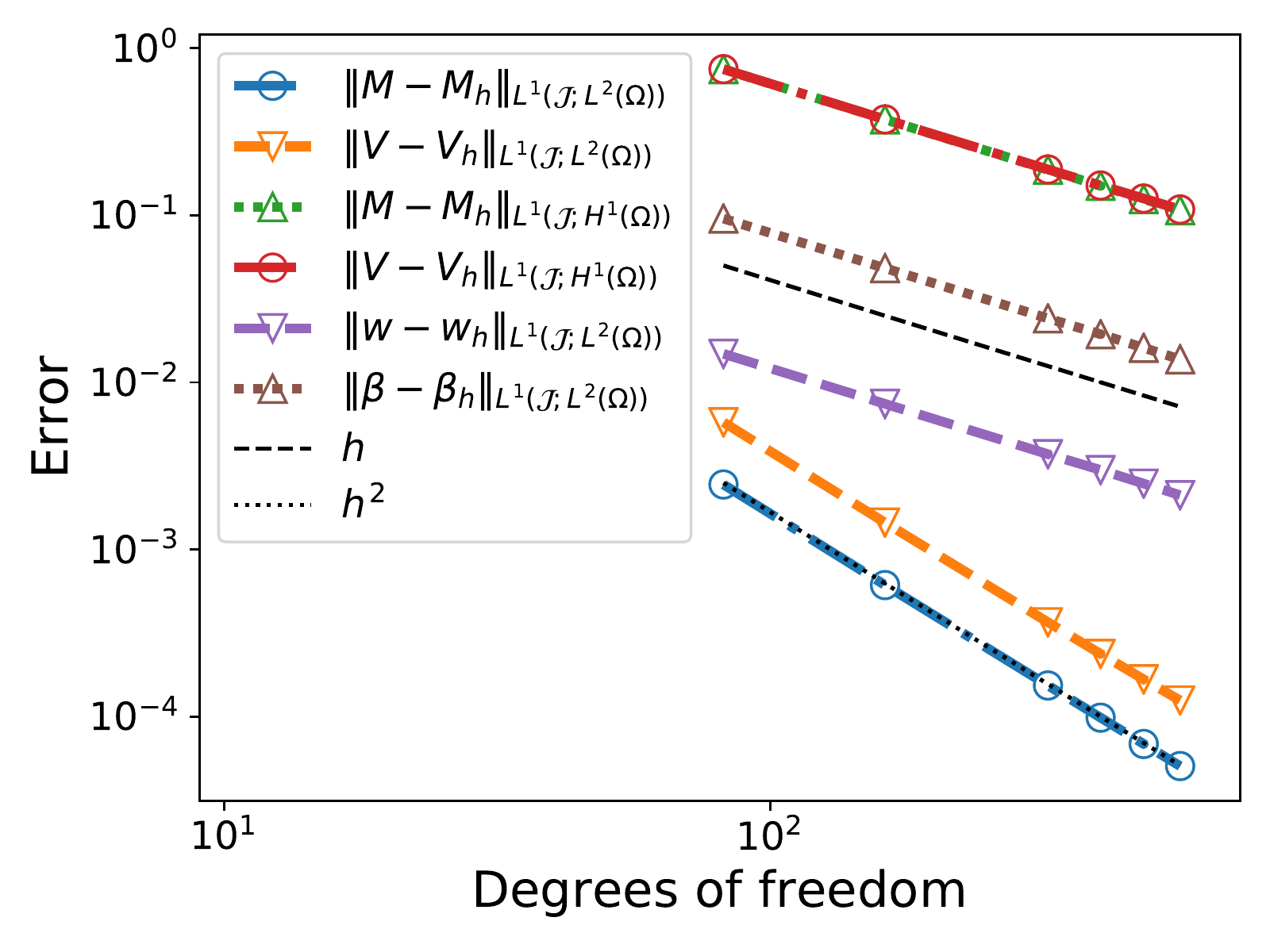}
	\caption{Convergence behavior for the rigidly joined clamped beams when solved with the bending moment formulation. The thickness is set to be $d=0.001\,m$.}
	\label{fig:joined-beams-errors}
\end{figure}
\begin{table}[!ht]
	\caption{Error data and rate of convergence of the bending moment $M$ and shear $V$ for a selected thickness $d=0.001\,m$ in the rigidly joined beams.}
	\centering\begin{tabular}{CCCCCC}
		\hline
		\text{DOF}&h&\texttt{e}_0(M)&\texttt{r}_0(M)&\texttt{e}_0(V)&\texttt{r}_0(V)\\%\\\cline{3-4}\cline{6-7}\cline{9-10}
		%&&e_1(\beta)&r_1(\beta)&&e_1(\beta)&r_1(\beta)&&e_1(\beta)&r_1(\beta)\\
		%\hline
		%\hline
		%\mbox{Exact}&&0.141648824   & 1414.220151    &  14141974.70 \\
		\hline
		42 &0.05    &2.4456e-3 &--    &5.7650e-3  &--      \\
		82 &0.025   &6.1175e-4 &1.99  &1.4470e-3  &1.99   \\
		162&0.0125  &1.5337e-4 &1.99  &3.6648e-4  &1.98  \\
		202&0.0100  &9.8416e-5 &1.98  &2.3705e-4  &1.95 \\
		242&0.00833 &6.8517e-5 &1.98  &1.6692e-4  &1.92 \\
		282&0.00714 &5.0522e-5 &1.97  &1.2486e-4  &1.88  \\
		\hline
		\text{DOF}&h&\texttt{e}_1(M)&\texttt{r}_1(M)&\texttt{e}_1(V)&\texttt{r}_1(V)\\
		\hline
		42 &0.05    &7.4824e-1 &--    &7.4845e-1 &--   \\
		82 &0.025   &3.7413e-1 &0.99  &3.7454e-1 &0.99   \\
		162&0.0125  &1.8711e-1 &0.99  &1.8790e-1 &0.99 \\
		202&0.0100  &1.4966e-1 &1.00  &1.5062e-1 &0.99\\
		242&0.00833 &1.2479e-1 &0.99  &1.2591e-1 &0.98\\
		282&0.00714 &1.0700e-1 &0.99  &1.0825e-1 &0.97 \\
		\hline
	\end{tabular}
	%\end{displaymath}
	
	\label{table5:error_l2_M_V_d001}
\end{table}
\begin{table}[!ht]
	\caption{Error data and rate of convergence of the transverse displacement $w$ and rotation $\beta$ for a selected thickness $d=0.001\,m$ in the rigidly joined beams.}
	\centering\begin{tabular}{CCCCCC}
		\hline
		\text{DOF}&h&\texttt{e}_0(w)&\texttt{r}_0(w)&\texttt{e}_0(\beta)&\texttt{r}_0(\beta)\\%\\\cline{3-4}\cline{6-7}\cline{9-10}
		%&&e_1(\beta)&r_1(\beta)&&e_1(\beta)&r_1(\beta)&&e_1(\beta)&r_1(\beta)\\
		%\hline
		%\hline
		%\mbox{Exact}&&0.141648824   & 1414.220151    &  14141974.70 \\
		\hline
		42 &0.05    &1.4806e-2 &--    &9.5487e-2  &--   \\
		82 &0.025   &7.4304e-3 &0.99  &4.8188e-2  &0.98    \\
		162&0.0125  &3.7139e-3 &1.00  &2.4118e-2  &0.99 \\
		202&0.0100  &2.9870e-3 &0.97  &1.9401e-2  &0.97  \\
		242&0.00833 &2.4761e-3 &1.02  &1.6084e-2  &1.02\\
		282&0.00714 &2.1113e-3 &1.03  &1.3715e-2  &1.03  \\
		\hline
	\end{tabular}
	\label{table5:error_l2_beta_w_d001}
\end{table}

\vspace{,5cm}
$\bullet$ {\it Beam with smoothly changing moment of inertia and cross-section area}
Let us consider a fully clamped beam of length $L=1\,m$, with moment of inertia and cross-section area given by $I(x)=e^x/12$ and $A(x)=12e^{-x}$, respectively. The thickness parameter is given by $\varepsilon\approx0.14894$. The relaxation modulus is given by $E(t)=0.5(1+e^{-t})$, corresponding to the SLS parameters $k_1=k_2=1$\,N/m$^2$ and $\eta_2=1$\,N$\cdot$s/m$^2$. The quasi-static exact solution is again obtained by following \cite{celiker2006locking} and the corresponding principle. The beam is subjected to a varying load $q(t)=e^x\,\text{H(t)\,N/m}$. The chosen period of observation is $T=15\,s$ with $5000$ time steps. 
\begin{table}[!ht]
	\caption{Error data and rate of convergence of the bending moment $M$ and hear forces $V$ in the beam with smoothly changing moment of inertia and cross-section area.}
	\centering \begin{tabular}{CCCCCC}
		\hline
		\text{DOF}&h&\texttt{e}_0(M)&\texttt{r}_0(M)&\texttt{e}_0(V)&\texttt{r}_0(V)\\%\\\cline{3-4}\cline{6-7}\cline{9-10}
		%&&e_1(\beta)&r_1(\beta)&&e_1(\beta)&r_1(\beta)&&e_1(\beta)&r_1(\beta)\\
		%\hline
		%\hline
		%\mbox{Exact}&&0.141648824   & 1414.220151    &  14141974.70 \\
		\hline
		42 &0.05    &4.7263e-3 &--    &1.0539e-2  &--   \\
		82 &0.025   &1.1818e-3 &1.99  &2.6378e-3  &1.99 \\
		162&0.0125  &2.9571e-4 &1.99  &6.6143e-4  &1.99 \\
		202&0.0100  &1.8938e-4 &1.99  &4.2426e-4  &1.99 \\
		242&0.00833 &1.3163e-4 &1.99  &2.9545e-4  &1.98 \\
		282&0.00714 &9.6807e-5 &1.99  &2.1781e-4  &1.97 \\
		\hline
		\text{DOF}&h&\texttt{e}_1(M)&\texttt{r}_1(M)&\texttt{e}_1(V)&\texttt{r}_1(V)\\
		\hline
		42 &0.05    &1.4449e-0 &--    &1.4451e-0 &--   \\
		82 &0.025   &7.2251e-1 &0.99  &7.2293e-1 &0.99   \\
		162&0.0125  &3.6130e-1 &0.99  &3.6214e-1 &0.99 \\
		202&0.0100  &2.8907e-1 &0.99  &2.9011e-1 &0.99\\
		242&0.00833 &2.4092e-1 &0.99  &2.4215e-1 &0.99\\
		282&0.00714 &2.0654e-1 &0.99  &2.0795e-1 &0.98 \\
		\hline
	\end{tabular}
	\label{table6:error_M_V_smooth_beam}
\end{table}
\begin{table}[!ht]
	\caption{Error data and rate of convergence of the transverse displacement $w$ and rotation $\beta$ in the beam with smoothly changing moment of inertia and cross-section area.}
	\centering \begin{tabular}{CCCCCC}
		\hline
		\text{DOF}&h&\texttt{e}_0(w)&\texttt{r}_0(w)&\texttt{e}_0(\beta)&\texttt{r}_0(\beta)\\%\\\cline{3-4}\cline{6-7}\cline{9-10}
		%&&e_1(\beta)&r_1(\beta)&&e_1(\beta)&r_1(\beta)&&e_1(\beta)&r_1(\beta)\\
		%\hline
		%\hline
		%\mbox{Exact}&&0.141648824   & 1414.220151    &  14141974.70 \\
		\hline
		42 &0.05    &2.8484e-2 &--    &1.8175e-1  &--   \\
		82 &0.025   &1.4218e-2 &1.00  &9.1261e-2  &0.99   \\
		162&0.0125  &7.1060e-3 &1.00  &4.5678e-2  &0.99 \\
		202&0.0100  &5.6845e-3 &1.00  &3.6547e-2  &0.99  \\
		242&0.00833 &4.7370e-3 &1.00  &3.0458e-2  &0.99\\
		282&0.00714 &4.0602e-3 &1.00  &2.6108e-2  &0.99  \\
		\hline
	\end{tabular}
	\label{table6:error_w_beta_smooth_beam}
\end{table}
\begin{figure}[!ht]
	\centering
	\includegraphics[scale=0.45]{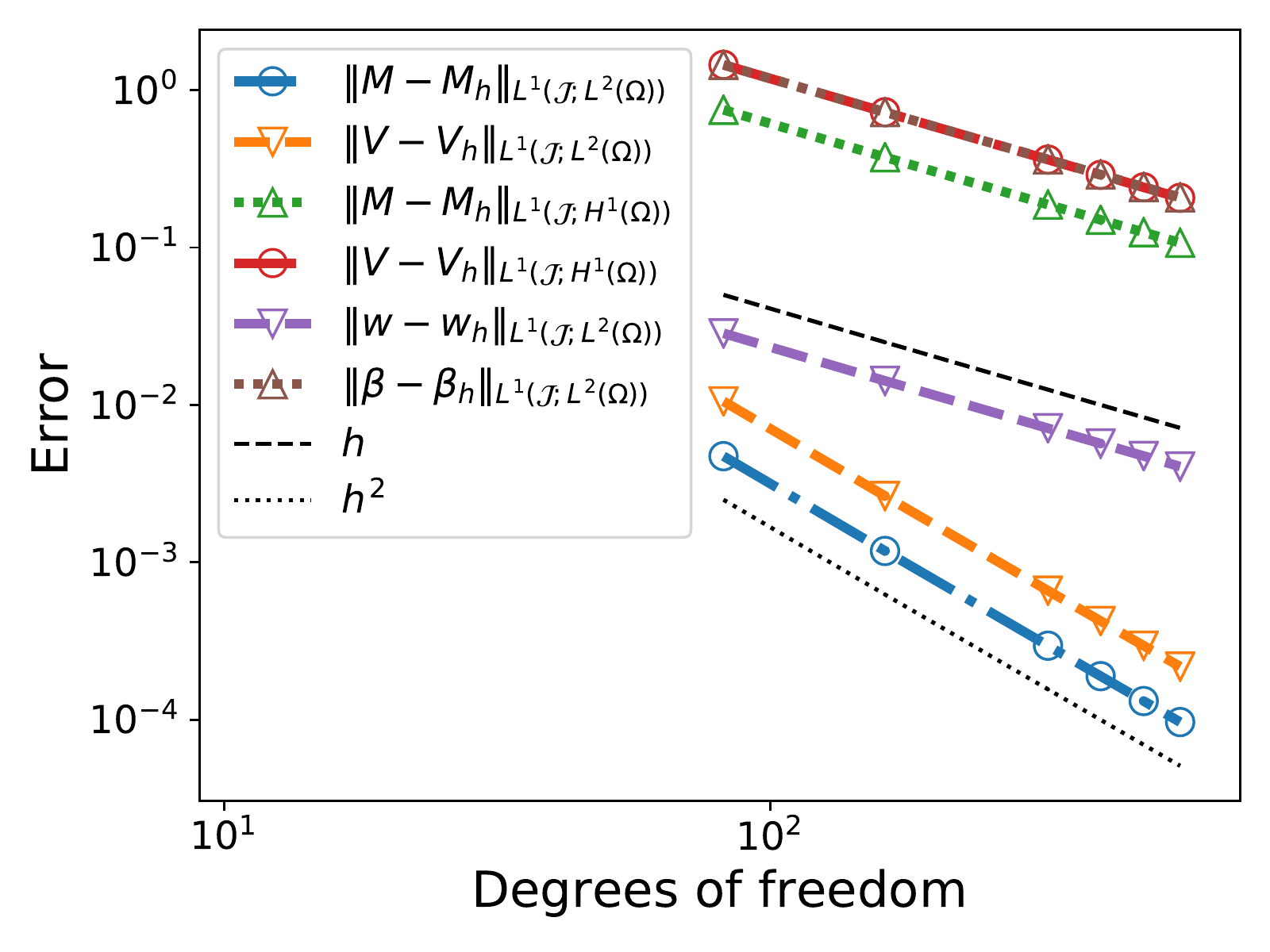}
	
	\caption{Convergence behavior for the clamped beam with smoothly changing moment of inertia and cross-section area when solved with the bending moment formulation. The observation time is  $T=15\,s$ and $\Delta t=0.003$.}
	\label{fig:smooth-beam-errors}
\end{figure}

%\label{sec:conclusions}
%Throughout this work we have developed a numerical analysis framework to study a linear viscoelastic Timoshenko beam. We prove that the proposed method is  locking-free respect to the thickness parameter of the beam. We prove that the finite element method  predicts accurately the quasi-static behavior of the structure.  We prove error estimates for the mixed finite element method of the viscoleastic problem, adapting the classic mixed theory. The numerical results show the performance of the method, where the theoretical errors are clearly obtained. Moreover, the method is stable as the thickness of the structure is smaller.  However, generalizations of the proposed abstract setting  to others structures are an open issue. %Also, the time regularity was taken on purpose in order to obtain the proved estimates by using Gronwall's lemma and the mixed formulation theory from \cite{boffi2013mixed}, this results in non optimal thickness-independent bounds. %Time regularity will be considered in future studies. 
	
%	\begin{acknowledgements}
%		The first and third author have been partially supported by FONDECYT No.1181098, Chile. The second author has been partially supported by FONDECYT Postdoctoral project No. 3190204, Chile.
%	\end{acknowledgements}

%    Text of article.

%    Bibliographies can be prepared with BibTeX using amsplain,
%    amsalpha, or (for "historical" overviews) natbib style.
\bibliographystyle{amsplain}
\bibliography{bibliofond}   % name your BibTeX data base
%    Insert the bibliography data here.

\end{document}